\newtheorem{thm}{Theorem}
\newtheorem{lem}[thm]{Lemma}
\newtheorem{cor}[thm]{Corollary}
\numberwithin{thm}{section}
\numberwithin{equation}{section}
\theoremstyle{definition}
\newtheorem*{conj}{Conjecture}
\newcommand{\rat}{\mathbb Q}
\newcommand{\real}{\mathbb R}
\newcommand{\alg}{\overline\rat}
\newcommand{\algt}{\alg^{\times}}
\newcommand{\intg}{\mathbb Z}
\newcommand{\nat}{\mathbb N}
\newcommand{\X}{\mathcal X}
\newcommand{\G}{\mathcal G}
\newcommand{\Sh}{\mathcal S}
\newcommand{\tor}{\mathrm{Tor}}
\newcommand{\rad}{\mathrm{Rad}}
\newcommand{\comment}[1]{}
\title{A collection of metric Mahler measures}
\author[C.L. Samuels]{Charles L. Samuels}
\address{University of British Columbia, Department of Mathematics, 1984 Mathematics Road, Vancouver, BC V6T 1Z2, Canada \newline \indent
Simon Fraser University, Department of Mathematics, 8888 University Drive, Burnaby, BC V5A 1S6, Canada}
\email{csamuels@math.ubc.ca}
\subjclass[2000]{Primary 11R04; Secondary 26A48}
\keywords{Weil height, Mahler measure, metric Mahler measure, Lehmer's problem}
\begin{document}

\begin{abstract}
	Let $M(\alpha)$ denote the Mahler measure of the algebraic number $\alpha$.  In a recent paper, Dubickas and Smyth constructed a metric version of the Mahler measure 
	on the multiplicative group of algebraic numbers.  Later, Fili and the author used similar techniques to study a non-Archimedean version.  We show how to generalize 
	the above constructions in order to associate, to each point in $(0,\infty]$, a metric version $M_x$ of the Mahler measure, each having a triangle inequality
	of a different strength.  We are able to compute $M_x(\alpha)$ for sufficiently small $x$, identifying, in the process, a function $\bar M$ with certain minimality
	properties.  Further, we show that the map $x\mapsto M_x(\alpha)$ defines a continuous function on the positive real numbers.
\end{abstract}

\maketitle

\section{Introduction} \label{Intro}

Let $f$ be a polynomial with complex coefficients given by
\begin{equation*}
	f(z) = a\cdot\prod_{n=1}^N(z-\alpha_n).
\end{equation*}
We define the {\it (logarithmic) Mahler measure} $M$ of $f$ by
\begin{equation*}
	M(f) = \log|a|+\sum_{n=1}^N\log^+|\alpha_n|.
\end{equation*}
If $\alpha$ is a non-zero algebraic number, we define the Mahler measure of $\alpha$ by
\begin{equation*}
	M(\alpha) = M(\min_\intg(\alpha)).
\end{equation*}
In other words, $M(\alpha)$ is simply the Mahler measure of the minimal polynomial of $\alpha$ over $\intg$.  It is well-known that
\begin{equation} \label{MahlerInverses}
	M(\alpha) = M(\alpha^{-1})
\end{equation}
for all algebraic numbers $\alpha$.

It is a consequence of a theorem of Kronecker that $M(\alpha) = 0$ if and only if $\alpha$ is a root of unity.  In a famous 1933 paper, D.H. Lehmer \cite{Lehmer} 
asked whether there exists a constant $c>0$ such that $M(\alpha) \geq c$ in all other cases.  He could find no algebraic number with Mahler measure smaller than 
that of
\begin{equation*}
  \ell(x) = x^{10}+x^9-x^7-x^6-x^5-x^4-x^3+x+1,
\end{equation*}
which is approximately $0.16\ldots$.  Although the best known general lower bound is
\begin{equation*}
	M(\alpha) \gg \left(\frac{\log\log\deg\alpha}{\log\deg\alpha}\right)^3,
\end{equation*}
due to Dobrowolski \cite{Dobrowolski},
uniform lower bounds haven been established in many special cases (see \cite{BDM, Schinzel, Smyth}, for instance).  Furthermore, numerical evidence provided by 
Mossinghoff \cite{Moss, MossWeb} and Mossinghoff, Pinner and Vaaler \cite{MPV} suggests there does, in fact, exist such a constant $c$.  
This leads to the following conjecture, which we will now call Lehmer's conjecture.

\begin{conj}[Lehmer's conjecture]
	There exists a real number $c > 0$ such that if $\alpha\in\algt$ is not a root of unity then $M(\alpha) \geq c$.
\end{conj}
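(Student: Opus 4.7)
The displayed statement is Lehmer's conjecture, open since 1933; no honest proof plan is available, and the excerpt itself cites Dobrowolski's estimate as the best known unconditional lower bound. What follows is a summary of the standard attack routes, offered in lieu of a genuine sketch.

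The principal general method is Dobrowolski's transcendence/interpolation argument. Assuming $M(\alpha)$ is small, one constructs via Siegel's lemma an auxiliary polynomial $F\in\intg[z]$ of controlled degree and height vanishing to high order at $\alpha$ and all its Galois conjugates. Exploiting the fact that $\alpha^p$ is Galois conjugate to $\alpha$ modulo a prime $p$ (by Frobenius), one estimates $F(\alpha^p)$ from above via the hypothesis on $M(\alpha)$ and from below via an integrality/norm argument. Choosing $p$ to range over primes up to a power of $\log\deg\alpha$ yields $M(\alpha)\gg(\log\log d/\log d)^3$ with $d=\deg\alpha$. The gap between this and a uniform $c>0$ is that only logarithmically many primes are available; closing it would require a substantially new input, and that is the central obstacle.

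The second route is to restrict attention to structured families where reciprocal symmetry can be exploited. Smyth's theorem gives $M(\alpha)\geq\log\theta_0$ for non-reciprocal $\alpha$, with $\theta_0$ the real root of $z^3-z-1$; comparable uniform bounds are known for totally real $\alpha$, abelian $\alpha$, and certain coefficient-restricted cases. This reduces the conjecture to reciprocal $\alpha$, which is precisely where the extremal example $\ell(x)$ lives and where no uniform bound is known.

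The third route, most relevant to the present paper, is the metric approach initiated by Dubickas and Smyth and extended by Fili and the author: one endows $\algt/\tor$ with a translation-invariant metric built from $M$ itself whose triangle inequality encodes nontrivial factorization information, and then studies the induced metric Mahler measures. Establishing a positive lower bound on any such $M_x$ at non-torsion points would imply Lehmer, since $M_x(\alpha)\leq M(\alpha)$ by construction. The hard part, which one expects to be fatal to a direct attack, is that any bound of this form must eventually confront the same obstructions that defeat the transcendence method; the present paper accordingly confines itself to structural results about the family $\{M_x\}$ and to computing $M_x(\alpha)$ for small $x$, rather than attempting the conjecture.
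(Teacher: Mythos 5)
You have correctly recognized that the displayed statement is Lehmer's conjecture, a famous open problem that the paper states but does not (and cannot) prove; the paper itself offers no proof, only the surrounding context of Dobrowolski's bound and the metric constructions, so there is nothing to compare against. Your survey of the known attack routes is accurate and consistent with the paper's framing, including the observation that a uniform positive lower bound on any $M_x$ away from torsion would imply the conjecture since $M_x \leq M$.
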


In an effort to create a geometric structure on the multiplicative group of algebraic numbers $\algt$, Dubickas and Smyth \cite{DubSmyth2} constructed
a metric version of the Mahler measure.  Let us briefly recall this construction.  Write
\begin{equation} \label{RestrictedProduct}
	\X(\algt) = \{(\alpha_1,\alpha_2,\ldots): \alpha_n = 1\ \mathrm{for\ all\ but\ finitely\ many}\ n\}
\end{equation}
to denote the restricted infinite direct product of $\algt$.  Let $\tau:\X(\algt) \to \algt$ be defined by 
\begin{equation*}
	\tau(\alpha_1,\alpha_2,\cdots) = \prod_{n=1}^\infty \alpha_n
\end{equation*} 
and note that $\tau$ is indeed a group homomorphism.  The {\it metric Mahler measure} $M_1$ of $\alpha$ is given by
\begin{equation*}
	M_1(\alpha) = \inf\left\{ \sum_{n=1}^\infty M(\alpha_n): (\alpha_1,\alpha_2,\ldots)\in \tau^{-1}(\alpha)\right\}.
\end{equation*}
We note that the infimum in the definition of $M_1(\alpha)$ is taken over all ways of writing $\alpha$ as a product of elements in $\algt$.
As a result of this construction, the function $M_1$ satisfies that triangle inequality
\begin{equation}  \label{MahlerTriangle}
	M_1(\alpha\beta) \leq M_1(\alpha) + M_1(\beta)
\end{equation}
for all $\alpha,\beta\in \algt$.  It can be shown that $M_1(\alpha) = 0$ if and only if $\alpha$ is a root of unity, and moreover,
$M_1$ is well-defined on the quotient group $\G = \algt/\tor(\algt)$.  Using \eqref{MahlerInverses} and \eqref{MahlerTriangle}, we find that the 
map $(\alpha,\beta)\mapsto M_1(\alpha\beta^{-1})$ is a metric on $\G$.  It is noted in \cite{DubSmyth2} that this map yields the discrete topology
if and only if Lehmer's conjecture is true.

Following the strategy of \cite{DubSmyth2}, Fili and the author \cite{FiliSamuels} examined a non-Archimedean version of the metric Mahler measure.
That is, define the {\it ultrametric Mahler measure} $M_\infty$ of $\alpha$ by
\begin{equation*}
	M_\infty(\alpha) = \inf\left\{ \max_{n\geq 1} M(\alpha_n): (\alpha_1,\alpha_2,\ldots)\in\tau^{-1}(\alpha)\right\},
\end{equation*}
replacing the sum in the definition of $M_1$ by a maximum.  In this case, $M_\infty$ has the strong triangle inequality
\begin{equation*}
	M_1(\alpha\beta) \leq \max \{M_1(\alpha),M_1(\beta)\}
\end{equation*}
for all $\alpha,\beta\in \algt$.  Once again, we are able to verify that
$M_\infty$ is well-defined on $\G$.  Here, the map $(\alpha,\beta)\mapsto M_\infty(\alpha\beta^{-1})$ yields a non-Archimedean metric on $\G$ which induces
the discrete topology if and only if Lehmer's conjecture is true.

In view of the definitions of $M_1$ and $M_\infty$, it is natural to define a collection of intermediate metric Mahler measures in the following way.
If $x\in (0,\infty]$, we define $M_x:\X(\algt)\to [0,\infty)$ by
\begin{equation*}
	M_x(\alpha_1,\alpha_2,\ldots) = \left\{
		\begin{array}{ll}
			\displaystyle \left( \sum_{n=1}^{\infty} M(\alpha_n)^x\right)^{1/x} & \mathrm{if}\ x\in (0,\infty) \\
			& \\
			\displaystyle \max_{n\geq 1}\{M(\alpha_n)\} & \mathrm{if}\ x = \infty.
		\end{array}
		\right.
\end{equation*}
In the case that $x \geq 1$, we see that $M_x(\alpha_1,\alpha_2,\ldots)$ is the $L^x$ norm on the vector $(M(\alpha_1),M(\alpha_2),\ldots)$.
Then we define the {\it $x$-metric Mahler measure} by
\begin{equation} \label{xMetricMahlerDef}
	M_x(\alpha) = \inf\{M_x(\bar\alpha): \bar\alpha\in\tau^{-1}(\alpha)\}
\end{equation}
and note that this definition generalizes those of $M_1$ and $M_\infty$.  Indeed, the $1$- and $\infty$-metric Mahler measures are simply the
metric and ultrametric Mahler measures, respectively.

In \cite{DubSmyth2}, Dubickas and Smyth showed that if Lehmer's conjecture is true, then the infimum in the definition of $M_1(\alpha)$ must
always be achieved.  The author \cite{Samuels} was able to verify that the infima in $M_1(\alpha)$ and $M_\infty(\alpha)$ are achieved even without the assumption of 
Lehmer's conjecture.  Moreover, this infimum must always be attained in a relatively simple subgroup of $\algt$.  In particular, if $K$ is a number field we write
\begin{equation*}
	\rad(K) = \left\{\alpha\in\algt:\alpha^r\in K\mathrm{\ for\ some}\ r\in\nat\right\}.
\end{equation*}
For any algebraic number $\alpha$, let $K_\alpha$ denote the Galois closure of $\rat(\alpha)$ over $\rat$.
We showed in \cite{Samuels} that the infimum in both $M_1(\alpha)$ and $M_\infty(\alpha)$ is always attained by some 
\begin{equation*}
	\bar\alpha \in \tau^{-1}(\alpha) \cap \X(\rad(K_\alpha)).
\end{equation*}
where $\X(\rad(K_\alpha))$ is defined similarly to $\X(\algt)$ in \eqref{RestrictedProduct}.
Not surprisingly, the same argument can be used to establish the analog for all values of $x$.

\begin{thm} \label{Achieved}
	Suppose $\alpha$ is a non-zero algebraic number and $x\in (0,\infty]$.  Then there exists a point $\bar\alpha\in \tau^{-1}(\alpha) \cap \X(\rad(K_\alpha))$ such that
	$M_x(\alpha) = M_x(\bar\alpha)$.
\end{thm}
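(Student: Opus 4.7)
My plan is to mimic the proof in \cite{Samuels}, which established the theorem for $x = 1$ and $x = \infty$. That argument uses only two properties of the norm on $\X(\algt)$, namely coordinate-wise monotonicity and the trivial upper bound $M_x(\alpha) \leq M(\alpha)$ coming from the tuple $(\alpha,1,1,\ldots)$. Both properties hold for every $M_x$ with $x \in (0,\infty]$, so the adaptation should be essentially cosmetic. In particular, any factorization $\bar\alpha = (\alpha_1,\alpha_2,\ldots) \in \tau^{-1}(\alpha)$ with $M_x(\bar\alpha) \leq M(\alpha)$ satisfies $M(\alpha_n) \leq M(\alpha)$ for every $n$, since both the quantity $\left(\sum_m M(\alpha_m)^x\right)^{1/x}$ and the quantity $\max_m M(\alpha_m)$ dominate each individual Mahler measure; consequently only such bounded tuples need to be considered when computing the infimum.

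Next I would invoke the Galois-theoretic replacement of \cite{Samuels}: for any $\bar\alpha \in \tau^{-1}(\alpha)$, one constructs $\bar\beta \in \tau^{-1}(\alpha) \cap \X(\rad(K_\alpha))$ with $M(\beta_n) \leq M(\alpha_n)$ for every $n$. Because the replacement is performed coordinate by coordinate and $M_x$ is monotone in each nonnegative coordinate, the pointwise inequality immediately yields $M_x(\bar\beta) \leq M_x(\bar\alpha)$. Hence the infimum in \eqref{xMetricMahlerDef} equals the infimum taken over $\tau^{-1}(\alpha) \cap \X(\rad(K_\alpha))$.

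Third, I would appeal to the finiteness step from \cite{Samuels}: modulo torsion there are only finitely many elements of $\rad(K_\alpha)$ of Mahler measure at most $M(\alpha)$, and in any near-optimal factorization only finitely many coordinates can be non-torsion. A standard extraction on a minimising sequence then produces a tuple $\bar\alpha \in \tau^{-1}(\alpha) \cap \X(\rad(K_\alpha))$ that achieves the infimum.

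I expect the Galois replacement to be the main technical obstacle, since it is the only step where the structure of $\rad(K_\alpha)$ is genuinely used. However, as that construction is carried out in \cite{Samuels} on a coordinate-by-coordinate basis without reference to the ambient norm, it transports verbatim; the passage from $x \in \{1,\infty\}$ to arbitrary $x \in (0,\infty]$ requires only that coordinate-wise monotonicity of the norm be preserved, which it manifestly is.
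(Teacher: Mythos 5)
Your first two steps---restricting to tuples whose coordinates all have Mahler measure at most some $B > M_x(\alpha)$, and pushing the infimum into $\tau^{-1}(\alpha)\cap\X(\rad(K_\alpha))$ via the coordinate-by-coordinate replacement of Theorem 2.1 of \cite{Samuels}---are exactly what the paper does, and your monotonicity observations there are correct. The gap is in the finiteness step. The assertion that, modulo torsion, there are only finitely many elements of $\rad(K_\alpha)$ with Mahler measure at most $M(\alpha)$ is false: for example $2^{1/n}\in\rad(\rat)\subseteq\rad(K_\alpha)$ satisfies $M(2^{1/n})=\log 2$ for every $n$, and these are pairwise inequivalent modulo $\tor(\algt)$. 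Northcott gives finiteness only under a simultaneous bound on the degree, and $\rad(K_\alpha)$ contains elements of unbounded degree. Consequently the set of admissible tuples remains infinite after all of your reductions, and the ``standard extraction on a minimising sequence'' has no finite or compact set from which to extract.

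What is actually true, and what the paper uses (Lemma 3.1 of \cite{Samuels}, restated as Lemma \ref{HeightInK}), is that the set of \emph{values} $\{M(\gamma):\gamma\in\rad(K_\alpha),\ M(\gamma)\leq B\}$ is finite. The same lemma also yields $M(\gamma)\geq C(\alpha)>0$ for every non-torsion $\gamma\in\rad(K_\alpha)$, which gives the uniform bound $N\leq 1+(B/C(\alpha))^x$ on the number of non-torsion coordinates; this bound is needed but is not established by your remark that ``only finitely many coordinates can be non-torsion,'' which is automatic from the definition of $\X$ for each individual tuple and says nothing uniform across the minimising family. With these two facts in hand, the quantity $\left(\sum_{n} M(\alpha_n)^x\right)^{1/x}$ ranges over a finite set of real numbers as the tuple varies, so the infimum is a minimum; no extraction argument is needed, and none is available without first correcting the finiteness claim.
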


We now turn our attention momentarily to the computation of some values of $M_x(\alpha)$.  First define
\begin{equation*}
	C(\alpha) = \inf\{ M(\gamma): \gamma\in K_\alpha\setminus \tor(\algt)\}
\end{equation*}
and note that by Northcott's Theorem \cite{Northcott}, the infimum on the right hand side of this definition is always achieved.  In paricular, this means
that $C(\alpha) > 0$.

The author \cite{Samuels2} gave a strategy for reducing the computation of $M_\infty(\alpha)$ to a finite set.  
The method uses the {\it modified Mahler measure}
\begin{equation} \label{MBarDef}
	\bar M(\alpha) = \inf \{M(\zeta\alpha):\zeta\in \tor(\algt)\}
\end{equation}
and gives the value of $M_\infty$ in terms of $\bar M$.  Although $\bar M$ requires taking an infimum over an infinite set, it is often very reasonable to calculate.
Indeed, the infimum on the right hand side of \eqref{MBarDef} is always attained at a root of unity $\zeta$ that makes $\deg(\zeta\alpha)$ as small as possible.
This function $\bar M$ arises again when computing $M_x(\alpha)$ for small $x$ in a more straightforward way than in \cite{Samuels2}.

\begin{thm} \label{SmallP}
	If $\alpha$ is a non-zero algebraic number and $x$ is a positive real number satisfying 
	\begin{equation} \label{AlwaysX}
		x\cdot (\log \bar M(\alpha) - \log C(\alpha)) \leq \log 2
	\end{equation}
	then $M_x(\alpha) = \bar M(\alpha)$.
\end{thm}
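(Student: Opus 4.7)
The plan is to establish the two inequalities $M_x(\alpha)\le\bar M(\alpha)$ and $M_x(\alpha)\ge\bar M(\alpha)$ separately: the upper bound by exhibiting a particular factorization, and the lower bound by invoking Theorem~\ref{Achieved} and then performing a case analysis on the number of non-torsion entries in the resulting minimizing tuple.

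For the upper bound, I would use the fact (noted after \eqref{MBarDef}) that the infimum defining $\bar M(\alpha)$ is attained at some $\zeta\in\tor(\algt)$ with $M(\zeta\alpha)=\bar M(\alpha)$.  The tuple $(\zeta\alpha,\zeta^{-1},1,1,\ldots)\in\tau^{-1}(\alpha)$ has $M_x$-value equal to $M(\zeta\alpha)=\bar M(\alpha)$, since $M(\zeta^{-1})=0$.  Taking the infimum over $\tau^{-1}(\alpha)$ then yields $M_x(\alpha)\le\bar M(\alpha)$.

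For the lower bound, Theorem~\ref{Achieved} supplies $\bar\alpha=(\alpha_1,\alpha_2,\ldots)\in\tau^{-1}(\alpha)\cap\X(\rad(K_\alpha))$ with $M_x(\alpha)=M_x(\bar\alpha)$.  Write $S=\{n:\alpha_n\notin\tor(\algt)\}$ for the indices of non-torsion entries.  If $|S|=0$, then $\alpha$ itself is torsion and both $M_x(\alpha)$ and $\bar M(\alpha)$ vanish.  If $|S|=1$ with the sole non-torsion entry $\alpha_{n_0}$, then $\alpha_{n_0}=\zeta^{-1}\alpha$ for $\zeta=\prod_{n\ne n_0}\alpha_n\in\tor(\algt)$, so $M_x(\bar\alpha)=M(\alpha_{n_0})\ge\bar M(\alpha)$ by the very definition of $\bar M$.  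If $|S|\ge 2$, I would invoke the inequality $M(\alpha_n)\ge C(\alpha)$ for every non-torsion $\alpha_n\in\rad(K_\alpha)$; this gives
\begin{equation*}
	M_x(\bar\alpha)^x=\sum_{n\in S}M(\alpha_n)^x\ge |S|\,C(\alpha)^x\ge 2\,C(\alpha)^x,
\end{equation*}
so $M_x(\bar\alpha)\ge 2^{1/x}C(\alpha)\ge\bar M(\alpha)$, where the last step is precisely the rearranged form of \eqref{AlwaysX}.

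The hard part will be justifying the radical estimate $M(\beta)\ge C(\alpha)$ for every $\beta\in\rad(K_\alpha)\setminus\tor(\algt)$.  The naive identity $M(\beta)=(d/r)\,M(\beta^r)$, with $r$ the minimal positive integer satisfying $\beta^r\in K_\alpha$ and $d=[\rat(\beta):\rat(\beta^r)]$, only yields $M(\beta)\ge(d/r)\,C(\alpha)$, which is insufficient when $d<r$.  I expect to close this gap either by applying Capelli's irreducibility criterion to force $d=r$ whenever $r$ is chosen minimally (so that $x^r-\beta^r$ is in fact irreducible over $\rat(\beta^r)$), or, when Capelli's second alternative obstructs this, by comparing $M(\beta)$ directly to the Mahler measure of the norm $N_{\rat(\beta)/\rat(\beta^r)}(\beta)=\zeta\beta^d\in\rat(\beta^r)\subseteq K_\alpha$ for an appropriate root of unity $\zeta$, thereby reducing to the bound $M\ge C(\alpha)$ that holds on $K_\alpha\setminus\tor(\algt)$ by definition.
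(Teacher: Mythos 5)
Your proposal is correct and follows essentially the same route as the paper: the upper bound comes from the factorization $\alpha=(\zeta\alpha)\cdot\zeta^{-1}$ (which the paper packages as Theorem \ref{BarPhiMetrics}), and the lower bound from Theorem \ref{Achieved} together with the same dichotomy on the number of non-torsion entries, closed by the rearranged hypothesis $2^{1/x}\ge\bar M(\alpha)/C(\alpha)$. The one step you flag as hard --- $M(\beta)\ge C(\alpha)$ for $\beta\in\rad(K_\alpha)\setminus\tor(\algt)$ --- is exactly inequality \eqref{RadBound}, which the paper obtains directly from the quoted Lemma \ref{HeightInK} (Lemma 3.1 of \cite{Samuels}), so no Capelli-type argument is required within this paper.
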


As we will discuss in detail in section \ref{AbelianHeights}, the construction given by \eqref{xMetricMahlerDef} is not unique to the Mahler measure.  
Suppose $\phi:\algt\to [0,\infty)$ satisfies 
\begin{equation} \label{BasicHeightProps}
	\phi(1) = 0\quad\mathrm{and}\quad \phi(\alpha) = \phi(\alpha^{-1})\ \mathrm{for\ all}\ \alpha\in\algt,
\end{equation}
and write
\begin{equation*}
	\phi_x(\alpha_1,\alpha_2,\ldots) = \left\{
		\begin{array}{ll}
			\displaystyle \left( \sum_{n=1}^{\infty} \phi(\alpha_n)^x\right)^{1/x} & \mathrm{if}\ x\in (0,\infty) \\
			& \\
			\displaystyle \max_{n\geq 1}\{\phi(\alpha_n)\} & \mathrm{if}\ x = \infty.
		\end{array}
		\right.
\end{equation*}
Generalizing the metric Mahler measure, let $\phi_x$ be defined by
\begin{equation} \label{xMetricSwitchDef}
	\phi_x(\alpha) = \inf\{\phi_x(\bar\alpha): \bar\alpha\in\tau^{-1}(\alpha)\}.
\end{equation}
We now write $\Sh(M)$ to denote the set of all functions $\phi$ satisfying \eqref{BasicHeightProps} such that $\phi_x(\alpha) = M_x(\alpha)$ for all $\alpha\in\algt$ 
and $x\in (0,\infty]$.  We are able to show that $\bar M$ belongs to $\Sh(M)$.  Moreover, it is a consequence of Theorem \ref{SmallP} that $\bar M$ is the minimal element 
of $\Sh(M)$.

\begin{cor} \label{MBarMinimal}
	We have that $\bar M\in \Sh(M)$.  Moreover, if $\psi\in\Sh(M)$ then $\psi(\alpha) \geq \bar M(\alpha)$ for all $\alpha\in\algt$.
\end{cor}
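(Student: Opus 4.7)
The plan is to first show $\bar M\in\Sh(M)$ and then to derive minimality as an almost immediate consequence of Theorem~\ref{SmallP}.

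Checking that $\bar M$ satisfies \eqref{BasicHeightProps} is routine: $\bar M(1)=0$ since $M$ vanishes on $\tor(\algt)$ by Kronecker's theorem, and $\bar M(\alpha^{-1})=\bar M(\alpha)$ follows by substituting $\zeta\mapsto \zeta^{-1}$ in the infimum defining $\bar M$ and applying \eqref{MahlerInverses}. The substantive point is to verify that $\bar M_x(\alpha)=M_x(\alpha)$ for every $\alpha\in\algt$ and every $x\in(0,\infty]$. The inequality $\bar M_x(\alpha)\le M_x(\alpha)$ is immediate from the pointwise bound $\bar M\le M$, obtained by taking $\zeta=1$ in \eqref{MBarDef}. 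For the reverse inequality I would fix $\varepsilon>0$ and a factorization $(\alpha_1,\dots,\alpha_N,1,1,\dots)\in\tau^{-1}(\alpha)$; for each $n\le N$ choose $\zeta_n\in\tor(\algt)$ with $M(\zeta_n\alpha_n)$ close to $\bar M(\alpha_n)$, and set $\eta=\prod_{n=1}^N\zeta_n$, which is itself a root of unity. The sequence $(\zeta_1\alpha_1,\dots,\zeta_N\alpha_N,\eta^{-1},1,1,\dots)$ then lies in $\tau^{-1}(\alpha)$, and since $M(\eta^{-1})=0$ its $M_x$-value equals the $L^x$-norm (or maximum, when $x=\infty$) of the finite vector $(M(\zeta_1\alpha_1),\dots,M(\zeta_N\alpha_N))$. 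By continuity of the $L^x$-norm on $\real^N$ at the appropriate point, the $\zeta_n$ can be chosen so that this quantity lies within $\varepsilon$ of $\bar M_x(\alpha_1,\alpha_2,\dots)$. Taking the infimum over factorizations and letting $\varepsilon\to 0^+$ yields $M_x(\alpha)\le\bar M_x(\alpha)$, completing the equality.

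For the minimality, let $\psi\in\Sh(M)$ and $\alpha\in\algt$. Applying the definition of $\psi_x$ to the trivial factorization $(\alpha,1,1,\dots)\in\tau^{-1}(\alpha)$ and using $\psi(1)=0$ gives $\psi_x(\alpha)\le \psi(\alpha)$ for every $x\in(0,\infty]$. Because $\psi\in\Sh(M)$ forces $\psi_x(\alpha)=M_x(\alpha)$, this reads $M_x(\alpha)\le\psi(\alpha)$. Now choose any $x>0$ satisfying \eqref{AlwaysX}; by Theorem~\ref{SmallP} we have $M_x(\alpha)=\bar M(\alpha)$, so $\bar M(\alpha)\le\psi(\alpha)$ as required.

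The only delicate step is the continuity argument in the case $0<x<1$, where $t\mapsto t^x$ has unbounded derivative at $t=0$; however, since the sum is finite and each perturbation can be taken arbitrarily small and independently, ordinary continuity of $(\delta_1,\dots,\delta_N)\mapsto\bigl(\sum_{n=1}^N(\bar M(\alpha_n)+\delta_n)^x\bigr)^{1/x}$ at the origin suffices. No deeper obstacle arises, as the minimality half is essentially a one-line application of Theorem~\ref{SmallP}.
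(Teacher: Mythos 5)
Your proof is correct. The minimality half is in substance the paper's own argument: the paper packages the inequality $\psi_x\le\psi$ (from the trivial factorization) together with the passage to small $x$ into the general Theorem \ref{OptimalMinimal}, applied to $M_0=\lim_{x\to 0^+}M_x$, which equals $\bar M$ by Theorem \ref{SmallP}; you simply inline that reasoning, which is fine (for $\alpha$ a root of unity, where \eqref{AlwaysX} is degenerate, the conclusion $\psi(\alpha)\ge\bar M(\alpha)=0$ is trivial anyway). The genuine difference is in how you establish $\bar M\in\Sh(M)$. The paper quotes the general Theorem \ref{BarPhiMetrics}, whose proof avoids all $\varepsilon$-management: it first shows the pointwise bound $M_x(\alpha)\le\bar M(\alpha)$ using only the two-term factorization $\alpha=\zeta^{-1}\cdot(\zeta\alpha)$, and then applies the $x$-metric construction to both sides of $M_x\le\bar M$, invoking the idempotence $(M_x)_x=M_x$ from \eqref{NoChangeMetric} of Theorem \ref{MetricConstruction}, to conclude $M_x\le\bar M_x$. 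You instead perturb an arbitrary finite factorization term by term and absorb the product $\eta$ of the twisting roots of unity into one extra slot of Mahler measure zero; this works, and your treatment of the continuity issue for $0<x<1$ is adequate --- though, as the paper remarks after \eqref{MBarDef}, the infimum defining $\bar M$ is actually attained, so you could choose each $\zeta_n$ with $M(\zeta_n\alpha_n)=\bar M(\alpha_n)$ exactly and dispense with the continuity step altogether. The paper's route is shorter because the general machinery does the bookkeeping; yours is self-contained and makes explicit which factorizations witness the equality $\bar M_x=M_x$.
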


We now ask if the map $x\mapsto M_x(\alpha)$ is continuous on $\real_{>0}$ for every algebraic number $\alpha$.  We recall that Theorem \ref{Achieved} asserts that, 
for each $x$, there exists 
a point $\bar\alpha\in \tau^{-1}(\alpha)$ that attains the infimum in the definition of $M_x(\alpha)$.  If the infimum is achieved at the same point $(\alpha_1,\alpha_2,\ldots)$
for all real $x$, then we have that
\begin{equation*}
	M_x(\alpha) = \left( \sum_{n=1}^N M(\alpha_n)^x\right)^{1/x}
\end{equation*}
which clearly defines a continuous function.  Unfortunately, using the example of $M_x(p^2)$ for a rational prime $p$, we see that this is not the case.

\begin{thm} \label{NotUniform}
	Let $p$ be a rational prime and assume that $(\alpha_1,\alpha_2,\ldots)\in \tau^{-1}(p^2)$ with $M_x(p^2) = M_x(\alpha_1,\alpha_2,\cdots)$.
	\begin{enumerate}[(i)]
		\item\label{P2Small} If $x\cdot (\log\log (p^2) - \log\log 2) < \log 2$ then precisely one point $\alpha_n$ differs from a root of unity.
		\item\label{P2Large} If $x >1$ then at least two points $\alpha_n$ differ from a root of unity.
		\end{enumerate}
\end{thm}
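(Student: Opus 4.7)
I would first identify the invariants of Theorem~\ref{SmallP} at $\alpha=p^2$. Since $K_{p^2}=\rat$ and a non-torsion rational $a/b$ in lowest terms has Mahler measure $\log\max(|a|,|b|)\geq\log 2$, we get $C(p^2)=\log 2$. For a primitive $n$-th root of unity $\zeta$, the element $\zeta p^2$ has degree $\phi(n)$ over $\rat$ and all conjugates of absolute value $p^2$, giving $M(\zeta p^2)=2\phi(n)\log p$; this is minimized at $\phi(n)=1$, so $\bar M(p^2)=2\log p$. The hypothesis of part~(i) is the strict form of the hypothesis of Theorem~\ref{SmallP}, so we conclude $M_x(p^2)=\bar M(p^2)=2\log p$.

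For part~(i), I would observe that the hypothesis forces $x<1$: one checks $\log\log(p^2)-\log\log 2=\log(2\log p/\log 2)\geq\log 2$ for every prime $p$, with equality at $p=2$. Now let $(\alpha_1,\alpha_2,\ldots)$ be any optimizer and suppose at least two components are non-torsion. Subadditivity of the absolute logarithmic Weil height $h$ together with $M(\alpha_n)=\deg(\alpha_n)h(\alpha_n)\geq h(\alpha_n)$ gives
\begin{equation*}
	\sum_n M(\alpha_n)\;\geq\;\sum_n h(\alpha_n)\;\geq\;h(p^2)\;=\;2\log p.
\end{equation*}
Since $0<x<1$, the map $t\mapsto t^x$ is strictly subadditive on the positive reals, so with at least two strictly positive summands one has $\sum_n M(\alpha_n)^x>\bigl(\sum_n M(\alpha_n)\bigr)^x\geq(2\log p)^x$. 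This contradicts $M_x(\alpha_1,\alpha_2,\ldots)=2\log p$. At least one component must be non-torsion because $p^2$ is, so there is exactly one.

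For part~(ii), the explicit representation $(p,p,1,1,\ldots)\in\tau^{-1}(p^2)$ yields $M_x(p^2)\leq(2(\log p)^x)^{1/x}=2^{1/x}\log p$. Since $x>1$ forces $2^{1/x}<2$, this gives $M_x(p^2)<2\log p=\bar M(p^2)$. If some optimizer $(\alpha_1,\alpha_2,\ldots)$ had exactly one non-torsion component $\alpha_1$, the remaining components would multiply to a root of unity, so $\alpha_1=\zeta p^2$ for some $\zeta\in\tor(\algt)$, and then $M_x(\alpha_1,\alpha_2,\ldots)=M(\zeta p^2)\geq\bar M(p^2)=2\log p>M_x(p^2)$, contradicting optimality. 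Hence at least two components are non-torsion.

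The essential step is the double subadditivity in part~(i): chaining subadditivity of $h$ on $\algt$ with the strict subadditivity of $t\mapsto t^x$ for $x\in(0,1)$. Both inequalities are elementary, but it is precisely the strictness of the second, together with the reduction of the hypothesis to $x<1$, that forces uniqueness of the non-torsion component; everything else is a comparison against the explicit decompositions $(p^2,1,\ldots)$ and $(p,p,1,\ldots)$.
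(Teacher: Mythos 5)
Your proposal is correct, and the two halves relate to the paper differently. Part (ii) is essentially the paper's own argument: compare the upper bound $2^{1/x}\log p$ coming from $(p,p,1,\ldots)$ against the lower bound $M(\zeta p^2)\geq \bar M(p^2)=2\log p$ that any single-non-torsion-component optimizer would have to satisfy. Part (i), however, takes a genuinely different route. The paper deduces (i) from Lemma \ref{StrongSmallP}, whose proof runs through Theorem \ref{Reduction} (reduction to $\rad(K_\alpha)$) and Lemma \ref{HeightInK} (the lower bound $M(\beta_n)\geq C(\alpha)$), and the hypothesis $x(\log\log(p^2)-\log\log 2)<\log 2$ is exactly what makes that lemma's dichotomy close. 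You instead chain two subadditivities: $\sum_n M(\alpha_n)\geq\sum_n h(\alpha_n)\geq h(p^2)=2\log p$, and then the strict subadditivity of $t\mapsto t^x$ for $x<1$ to conclude $\sum_n M(\alpha_n)^x>(2\log p)^x\geq M_x(p^2)^x$ whenever two summands are positive. This is elementary, bypasses the radical-subgroup machinery entirely, and in fact proves more than the stated (i): it rules out two non-torsion components for \emph{every} $x<1$, not just for $x$ below the threshold $\log 2/(\log\log(p^2)-\log\log 2)$, which is strictly smaller than $1$ for $p>2$. The trade-off is that your argument leans on the coincidence $h(p^2)=M(p^2)=\bar M(p^2)$, which holds because $p^2$ is rational; for a general $\alpha$ the bound $\sum_n M(\alpha_n)\geq h(\alpha)=M(\alpha)/\deg\alpha$ is far too weak to reach $\bar M(\alpha)$, so the paper's Lemma \ref{StrongSmallP} remains the tool of general applicability. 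Your reduction of the hypothesis to $x<1$ (via $\log(2\log p/\log 2)\geq\log 2$) and the computations $C(p^2)=\log 2$, $\bar M(p^2)=2\log p$ all check out.
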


Although the infimum in $M_x(\alpha)$ is not achieved at the same point for all $x$, we are able to prove that $x\mapsto M_x(\alpha)$ is continuous for all $\alpha$.

\begin{thm} \label{Continuous}
	If $\alpha$ is a non-zero algebraic number then the map $x \mapsto M_x(\alpha)$ is continuous on the positive real numbers.
\end{thm}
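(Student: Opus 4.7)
The plan is to combine monotonicity of $x\mapsto M_x(\alpha)$, upper semi-continuity, and a H\"older-type estimate controlled by the minimizers supplied by Theorem \ref{Achieved}. Write $f(x) = M_x(\alpha)$. First I would observe that for any finite-support $\bar\alpha\in\tau^{-1}(\alpha)$ the elementary inequality $\|a\|_x \le \|a\|_y$ on non-negative sequences $a$ with $0<y<x$, applied to $a = (M(\alpha_n))$, shows after taking the infimum that $f$ is non-increasing on $(0,\infty)$. Second, for each fixed $\bar\alpha$ with finite support the map $x\mapsto M_x(\bar\alpha)$ is plainly continuous, and Theorem \ref{Achieved} supplies a witness $\bar\alpha^{*}$ with $M_{x_0}(\bar\alpha^{*}) = f(x_0)$, so $f(x) \le M_x(\bar\alpha^{*})\to M_{x_0}(\bar\alpha^{*}) = f(x_0)$ and $f$ is upper semi-continuous.

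These two facts deliver left-continuity at each $x_0$ for free: for $x<x_0$, monotonicity gives $f(x)\ge f(x_0)$ while upper semi-continuity gives $\limsup_{x\to x_0} f(x) \le f(x_0)$, so $\lim_{x\to x_0^-} f(x) = f(x_0)$.

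Right-continuity is the real content. The key tool is the H\"older inequality $\|a\|_{x_0} \le N^{1/x_0 - 1/x}\|a\|_x$ for $0<x_0<x$ and an $N$-term non-negative sequence. Applying this to the minimizer $\bar\alpha^{(x)}$ produced by Theorem \ref{Achieved}, with $N(x)$ non-trivial coordinates, gives
\begin{equation*}
	f(x_0) \le M_{x_0}(\bar\alpha^{(x)}) \le N(x)^{1/x_0 - 1/x}\,M_x(\bar\alpha^{(x)}) = N(x)^{1/x_0 - 1/x}\,f(x).
\end{equation*}
Provided $N(x)$ stays bounded as $x\to x_0^+$, the exponent tends to $0$ and the prefactor to $1$, forcing $f(x_0)\le\liminf_{x\to x_0^+} f(x)$; combined with upper semi-continuity this completes the proof.

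The hard part is precisely controlling $N(x)$ uniformly in a right-neighborhood of $x_0$. The plan here is to use that the minimizer sits in $\X(\rad(K_\alpha))$: each non-trivial $\alpha_n$ satisfies $\alpha_n^{r_n}\in K_\alpha$ for some integer $r_n\ge 1$, so $r_n M(\alpha_n) = M(\alpha_n^{r_n}) \ge C(\alpha)$, and the upper bound $\sum_n M(\alpha_n)^x \le M(\alpha)^x$ translates into $\sum_n r_n^{-x} \le (M(\alpha)/C(\alpha))^x$. This bounds the number of factors with $r_n$ in any prescribed range; factors with large $r_n$ have correspondingly small $M(\alpha_n)$ and can be aggregated via $M(\prod\alpha_n)\le \sum M(\alpha_n)$ into a single replacement factor. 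The delicate combinatorial step will be showing that this trimming procedure produces a near-optimal decomposition of uniformly bounded size whose $M_x$-value exceeds $f(x)$ by only $o(1)$ as $x\to x_0^+$, so that the H\"older bound applies with vanishing slack. This last estimate, which must avoid any dependence on the uncontrolled count of small factors absorbed into the aggregate, is the technical heart of the argument.
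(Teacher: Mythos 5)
Your architecture is sound and in fact close to the paper's: left-continuity from monotonicity plus upper semi-continuity (the paper's Theorem \ref{LeftSemiContinuous} does the same thing with an $\varepsilon$-approximate witness rather than an exact minimizer), and right-continuity by comparing the $\ell^{x_0}$- and $\ell^{x}$-norms of the minimizer at $x$, which is a clean substitute for the paper's mean-value-theorem estimate on $\log f_y$. The problem is the step you yourself flag as the ``technical heart'': the uniform bound on $N(x)$. You do not prove it, and the route you sketch contains two genuine errors. First, $M(\alpha_n^{r_n}) = r_n M(\alpha_n)$ is false: the Mahler measure is $\deg(\cdot)$ times the Weil height, and while $h(\gamma^r) = r\,h(\gamma)$ (this is \eqref{IntPowers}), the degree can drop under powering, so one only gets $M(\gamma^r) \le r\,M(\gamma)$. (That weaker inequality still yields $r_n M(\alpha_n) \ge C(\alpha)$, so this slip is repairable.) Second, and fatally for the trimming step, $M\bigl(\prod_n \alpha_n\bigr) \le \sum_n M(\alpha_n)$ is the triangle inequality for $M$ itself, which fails in general --- its failure is precisely why $M_1 \ne M$ and why the metric Mahler measure is a nontrivial construction --- so you cannot aggregate the many small factors into a single replacement factor at controlled cost this way.

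The gap closes in one line once you use the full strength of Lemma \ref{HeightInK}: by \eqref{RadBound}, every $\gamma \in \rad(K_\alpha)\setminus\tor(\algt)$ satisfies $M(\gamma) \ge C(\alpha)$ outright, not merely $r\,M(\gamma) \ge C(\alpha)$. Since Theorem \ref{Achieved} places the minimizer in $\X(\rad(K_\alpha))$, and the torsion components contribute nothing, you get $N(x)\,C(\alpha)^x \le \sum_n M(\alpha_n)^x = M_x(\alpha)^x \le M(\alpha)^x$, hence $N(x) \le (M(\alpha)/C(\alpha))^x \le (M(\alpha)/C(\alpha))^{x_0+1}$ for all $x \in (x_0, x_0+1]$. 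No trimming or case analysis on the $r_n$ is needed. With that bound inserted, your H\"older argument does complete the proof, and is essentially equivalent to the paper's.
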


It is worth noting that continuity appears to be somewhat special to the Mahler measure.  That is, we cannot expect an arbitrary function $\phi$ satisfying \eqref{BasicHeightProps} 
to be such that $x\mapsto \phi_x(\alpha)$ is continuous.  Even making a slight modification to the Mahler measure causes continuity to fail.  For example, define
the {\it Weil height} of $\alpha\in \algt$ by
\begin{equation*}
	h(\alpha) = \frac{M(\alpha)}{\deg\alpha}
\end{equation*}
and note that, in view of our remarks about the Mahler measure, $h(\alpha) = 0$ if and only if $\alpha$ is a root of unity.  In fact, it is well-known that
\begin{equation} \label{WeilHeightDefined}
	h(\alpha) = h(\zeta\alpha)
\end{equation}
for all roots of unity $\zeta$.  Moreover, we have that $h(\alpha) = h(\alpha^{-1})$ for all $\alpha\in\algt$ so that $h$ satisfies \eqref{BasicHeightProps}.  
Unlike the Mahler measure, we know how to compute $h_x(\alpha)$ for every $x$ and $\alpha$.

\begin{thm} \label{WeilHeightComp}
	If $\alpha$ is a non-zero algebraic number then
	\begin{equation*}
		h_x(\alpha) = \left\{ \begin{array}{ll}
			h(\alpha) & \mathrm{if}\ x \leq 1 \\
			0 & \mathrm{if}\ x > 1.
		\end{array}
		\right.
\end{equation*}
\end{thm}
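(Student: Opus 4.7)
The proof rests on two basic properties of the Weil height: the triangle inequality $h(\alpha\beta)\leq h(\alpha)+h(\beta)$ and the scaling law $h(\alpha^N)=N\cdot h(\alpha)$ for every positive integer $N$. The two cases $x\leq 1$ and $x>1$ are handled by different devices, so I would treat them separately.

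\textbf{Case $x\leq 1$.} The decomposition $\bar\alpha=(\alpha,1,1,\ldots)\in\tau^{-1}(\alpha)$ gives $h_x(\bar\alpha)=h(\alpha)$, so $h_x(\alpha)\leq h(\alpha)$. For the reverse inequality, fix any $\bar\alpha=(\alpha_1,\alpha_2,\ldots)\in\tau^{-1}(\alpha)$; since all but finitely many entries are $1$, the sums below are in fact finite. Combining the Weil height triangle inequality with the elementary subadditivity $(\sum_n a_n)^x\leq\sum_n a_n^x$, which holds for nonnegative reals and $0<x\leq 1$, I obtain
\[
h(\alpha)^x = h\!\left(\prod_n\alpha_n\right)^{\!x} \leq \left(\sum_n h(\alpha_n)\right)^{\!x} \leq \sum_n h(\alpha_n)^x = h_x(\bar\alpha)^x.
\]
Taking $x$-th roots and then an infimum over $\bar\alpha$ yields $h(\alpha)\leq h_x(\alpha)$, completing this case.

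\textbf{Case $x>1$.} For each positive integer $N$, fix $\beta_N\in\algt$ with $\beta_N^N=\alpha$; the scaling law then gives $h(\beta_N)=h(\alpha)/N$. Let $\bar\alpha_N$ be the element of $\X(\algt)$ whose first $N$ entries equal $\beta_N$ and whose remaining entries are $1$, so that $\bar\alpha_N\in\tau^{-1}(\alpha)$. For finite $x>1$,
\[
h_x(\alpha) \leq h_x(\bar\alpha_N) = \bigl(N\cdot(h(\alpha)/N)^x\bigr)^{1/x} = N^{1/x-1}\,h(\alpha),
\]
and since $1/x-1<0$, letting $N\to\infty$ forces $h_x(\alpha)=0$. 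The case $x=\infty$ is even simpler, since $h_\infty(\bar\alpha_N)=h(\alpha)/N\to 0$.

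\textbf{Main obstacle.} There is no substantial difficulty; the result is essentially a direct application of the $\ell^x$ inequalities for sequences combined with the scaling identity $h(\alpha^N)=Nh(\alpha)$. The only conceptual point worth highlighting is that the dichotomy at $x=1$ is precisely the switch between sub- and superadditivity of $t\mapsto t^x$ on $[0,\infty)$. This also explains why $h$ behaves so differently from $M$: the identity $M(\alpha^N)=NM(\alpha)$ fails in general, so the root-extraction trick that drives $h_x$ to $0$ for $x>1$ cannot be reused for the Mahler measure.
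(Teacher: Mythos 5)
Your proof is correct and follows essentially the same route as the paper: the $x>1$ case is the identical root-extraction argument using $h(\beta_N)=h(\alpha)/N$, and the $x\leq 1$ case rests on the same fact that $h$ satisfies the $x$-triangle inequality there. The only cosmetic difference is that you verify the lower bound $h_x(\alpha)\geq h(\alpha)$ directly from subadditivity of $t\mapsto t^x$, where the paper instead invokes its general Theorem \ref{MetricConstruction} (parts \eqref{NoChangeMetric} and \eqref{Comparisons}); you also handle $x=\infty$ explicitly, which the paper leaves implicit.
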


As we have noted, Theorem \ref{WeilHeightComp} does indeed show that $x\mapsto h_x(\alpha)$ is possibly discontinuous.  More specifically, it is continuous if and only if
$\alpha$ is a root of unity.

\section{Heights on Abelian groups} \label{AbelianHeights}

In this section, we generalize our $x$-metric Mahler measure construction to a very broad class of functions on an abelian group $G$ by exploring
definition \eqref{xMetricSwitchDef} in more detail.
We are able to establish some basic properties in this situation that we can use to prove our main results.

Let $G$ be a multiplicatively written abelian group.  We say that $\phi:G \to [0,\infty)$ is a {\it (logarithmic) height} on $G$ if
\begin{enumerate}[(i)]
	\item $\phi(1) = 0$, and
	\item $\phi(\alpha) = \phi(\alpha^{-1})$ for all $\alpha\in G$.
\end{enumerate}
If $\psi$ is another height on $G$, we follow the conventional notation that
\begin{equation*}
	\phi = \psi \quad \mathrm{or} \quad \phi \leq \psi
\end{equation*}
when $\phi(\alpha) = \psi(\alpha)$ or $\phi(\alpha) \leq \psi(\alpha)$ for all $\alpha\in G$, respectively.  We write
\begin{equation*}
	Z(\phi) = \{ \alpha\in G: \phi(\alpha) = 0\}
\end{equation*}
to denote the {\it zero set} of $\phi$.

If $x$ is a positive real number then we say that $\phi$ has the {\it $x$-triangle inequality} if
\begin{equation*}
	\phi(\alpha\beta) \leq \left (\phi(\alpha)^x + \phi(\beta)^x\right )^{1/x}
\end{equation*}
for all $\alpha,\beta\in G$.  We say that $\phi$ has the {\it $\infty$-triangle inequaltiy} if
\begin{equation*}
	\phi(\alpha\beta) \leq \max\{\phi(\alpha),\phi(\beta)\}
\end{equation*}
for all $\alpha,\beta\in G$.  For appropriate $x$, we say that these functions are {\it $x$-metric heights}.   We observe that the $1$-triangle inequality is simply 
the classical triangle inequality while the $\infty$-triangle inequality is the strong triangle inequality.  We also obtain the following ordering of the $x$-triangle
inequalities.

\begin{lem} \label{Intermediates}
	Suppose that $G$ is an abelian group and that $x,y\in (0,\infty]$ with $x\geq y$.  If $\phi$ is an $x$-metric height on $G$ then
	$\phi$ is also a $y$-metric height on $G$.
\end{lem}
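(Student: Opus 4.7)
The plan is to reduce the lemma to a monotonicity statement about two-variable $\ell^t$-type expressions. Specifically, I would first establish that for any $a,b\geq 0$ and $0<y\leq x\leq \infty$,
\begin{equation*}
	(a^x+b^x)^{1/x} \leq (a^y+b^y)^{1/y},
\end{equation*}
where the left side is to be interpreted as $\max\{a,b\}$ when $x=\infty$. Granting this, the lemma is immediate: for any $\alpha,\beta\in G$, the $x$-triangle inequality for $\phi$ combined with this monotonicity applied to $a=\phi(\alpha)$ and $b=\phi(\beta)$ yields
\begin{equation*}
	\phi(\alpha\beta) \leq (\phi(\alpha)^x+\phi(\beta)^x)^{1/x} \leq (\phi(\alpha)^y+\phi(\beta)^y)^{1/y},
\end{equation*}
which is precisely the $y$-triangle inequality.

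For the monotonicity claim when $y\leq x<\infty$, I would use a homogeneity argument. The case $a=b=0$ is trivial, so after rescaling I may assume $a^y+b^y=1$; this forces $a,b\in[0,1]$. Since $x\geq y$, we then have $a^x\leq a^y$ and $b^x\leq b^y$, so $a^x+b^x\leq 1$, and raising to the $1/x$ power gives $(a^x+b^x)^{1/x}\leq 1=(a^y+b^y)^{1/y}$. The endpoint $x=\infty$ is handled separately by observing that $\max\{a,b\}^y\leq a^y+b^y$, so $\max\{a,b\}\leq (a^y+b^y)^{1/y}$, and then combining with the $\infty$-triangle inequality $\phi(\alpha\beta)\leq\max\{\phi(\alpha),\phi(\beta)\}$.

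I do not expect any significant obstacle here; the lemma is essentially the classical fact that $\ell^t$-norms on $\real^2$ are non-increasing in $t\in(0,\infty]$, combined with the definition of the $x$-triangle inequality. The only care needed is to treat the case $x=\infty$ separately from the finite case, which the observation above handles cleanly.
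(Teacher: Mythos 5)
Your proposal is correct and follows essentially the same route as the paper: both reduce the lemma to the monotonicity $(a^x+b^x)^{1/x}\leq (a^y+b^y)^{1/y}$ and handle $x=\infty$ separately via $\max\{a,b\}\leq(a^y+b^y)^{1/y}$. The only (immaterial) difference is how that elementary inequality is verified — you normalize $a^y+b^y=1$ and use $t^x\leq t^y$ for $t\in[0,1]$, while the paper invokes $a^q+b^q\leq(a+b)^q$ for $q=x/y\geq 1$.
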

\begin{proof}
  If $a,b$ and $q$ are real numbers with $a,b \geq 0$ and $q\geq 1$, then it is easily verified that
  \begin{equation} \label{MVTapp}
    a^q+b^q \leq (a+b)^q.
  \end{equation}
  Let us now assume that $\phi$ has the $x$-triangle inequality and that $\alpha,\beta\in G$.  If $x=y =\infty$ then the lemma is completely trivial.  
	If $x = \infty$ and $y <\infty$ then we have that
	\begin{equation*}
		\phi(\alpha\beta) \leq \max\{\phi(\alpha),\phi(\beta)\} = \max\{\phi(\alpha)^y,\phi(\beta)^y\}^{1/y} \leq (\phi(\alpha)^y + \phi(\beta)^y)^{1/y}
	\end{equation*}
	so that the result follows easily as well.  Hence, we assume now that $\infty > x\geq y$.  In this situation, we have that $x/y \geq 1$.
	Therefore, by \eqref{MVTapp} we have that
	\begin{equation*}
		(\phi(\alpha)^y + \phi(\beta)^y)^{x/y} \geq \phi(\alpha)^x + \phi(\beta)^x
	\end{equation*}
	and it follows that
	\begin{equation*}
		(\phi(\alpha)^y + \phi(\beta)^y)^{1/y} \geq (\phi(\alpha)^x + \phi(\beta)^x)^{1/x}.
	\end{equation*}
	Hence, we have that $\phi(\alpha\beta) \leq (\phi(\alpha)^y + \phi(\beta)^y)^{1/y}$ so that $\phi$ has the $y$-triangle inequaity.
\end{proof}

We now observe that each $x$-metric height is well-defined on the quotient group $G/Z(\phi)$.  
In the case that $x\geq 1$, the map $(\alpha,\beta) \mapsto \phi(\alpha\beta^{-1})$ defines a metric on $G/Z(\phi)$.

\begin{thm} \label{MetricProperties}
	If $\phi:G\to [0,\infty)$ is an $x$-metric height for some $x\in (0,\infty]$ then
	\begin{enumerate}[(i)]
		\item\label{Subgroup} $Z(\phi)$ is a subgroup of $G$.
		\item\label{WellDefined} $\phi(\zeta \alpha) = \phi(\alpha)$ for all $\alpha\in G$ and $\zeta\in Z(\phi)$.  That is, $\phi$ is well-defined on the quotient $G/Z(\phi)$.
		\item\label{FancyMetric} If $x\geq 1$, then the map $(\alpha,\beta)\mapsto \phi(\alpha\beta^{-1})$ defines a metric on $G/Z(\phi)$.
	\end{enumerate}
\end{thm}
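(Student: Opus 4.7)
The plan is to verify parts \eqref{Subgroup}, \eqref{WellDefined}, \eqref{FancyMetric} in order, each leaning on the previous. Throughout, the two defining properties of a height ($\phi(1) = 0$ and $\phi(\alpha) = \phi(\alpha^{-1})$) together with the $x$-triangle inequality do essentially all the work; Lemma \ref{Intermediates} enters only at the very end, to turn the hypothesis $x \geq 1$ into the classical triangle inequality needed for the metric axiom.

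For \eqref{Subgroup}, I would note that $1 \in Z(\phi)$ by definition of a height. If $\alpha,\beta \in Z(\phi)$, the $x$-triangle inequality gives
\begin{equation*}
  \phi(\alpha\beta) \leq \bigl(\phi(\alpha)^x + \phi(\beta)^x\bigr)^{1/x} = 0
\end{equation*}
(or the analogous statement with $\max$ when $x = \infty$), so $\alpha\beta \in Z(\phi)$. Closure under inverses is immediate from $\phi(\alpha^{-1}) = \phi(\alpha)$. For \eqref{WellDefined}, fix $\zeta \in Z(\phi)$ and $\alpha \in G$. The $x$-triangle inequality yields $\phi(\zeta\alpha) \leq \phi(\alpha)$ directly; applying the same inequality to the product $\alpha = \zeta^{-1} \cdot (\zeta\alpha)$ together with $\phi(\zeta^{-1}) = 0$ gives the reverse inequality $\phi(\alpha) \leq \phi(\zeta\alpha)$, so the two are equal.

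For \eqref{FancyMetric}, I first check that $(\alpha Z(\phi), \beta Z(\phi)) \mapsto \phi(\alpha\beta^{-1})$ is well defined on $G/Z(\phi)$: replacing $\alpha, \beta$ by $\alpha\zeta_1, \beta\zeta_2$ with $\zeta_i \in Z(\phi)$ changes $\alpha\beta^{-1}$ to $\alpha\beta^{-1} \cdot \zeta_1\zeta_2^{-1}$, and part \eqref{Subgroup} says $\zeta_1\zeta_2^{-1} \in Z(\phi)$, so part \eqref{WellDefined} says $\phi$ is unchanged. Non-negativity is trivial; symmetry follows from $\phi(\alpha\beta^{-1}) = \phi((\alpha\beta^{-1})^{-1}) = \phi(\beta\alpha^{-1})$; and $\phi(\alpha\beta^{-1}) = 0$ is equivalent to $\alpha\beta^{-1} \in Z(\phi)$, i.e.\ to $\alpha Z(\phi) = \beta Z(\phi)$. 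For the triangle inequality, given $\alpha,\beta,\gamma \in G$, I write $\alpha\beta^{-1} = (\alpha\gamma^{-1})(\gamma\beta^{-1})$. Since $x \geq 1$, Lemma \ref{Intermediates} (applied with $y = 1$) says $\phi$ has the classical $1$-triangle inequality, so
\begin{equation*}
  \phi(\alpha\beta^{-1}) \leq \phi(\alpha\gamma^{-1}) + \phi(\gamma\beta^{-1}),
\end{equation*}
as required.

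Nothing in this argument looks technically delicate; the only conceptual point I want to make sure to highlight is that \eqref{FancyMetric} really does need $x \geq 1$, because the $x$-triangle inequality for $x < 1$ is weaker than the triangle inequality and would not yield a metric. This is precisely what Lemma \ref{Intermediates} is there to handle, and it is the step most likely to trip up a reader who hasn't internalized that lemma.
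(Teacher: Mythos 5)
Your proposal is correct and follows essentially the same route as the paper: the two height axioms plus the $x$-triangle inequality handle \eqref{Subgroup} and \eqref{WellDefined} via the same two-sided estimate on $\phi(\zeta^{-1}\cdot\zeta\alpha)$, and \eqref{FancyMetric} reduces to the classical triangle inequality through Lemma \ref{Intermediates}. The only difference is that you spell out the metric axioms and the well-definedness on $G/Z(\phi)$, which the paper leaves as ``follows immediately.''
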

\begin{proof}
	We first establish \eqref{Subgroup}.  Obviously, we have that $1\in Z(G)$ by definition of height.  Further, if $\phi(\alpha) = 0$ 
	then again by definition of height we know that $\phi(\alpha^{-1}) = 0$.
	If $\alpha,\beta\in Z(G)$ then using the $x$ triangle inequality we obtain
	\begin{equation*}
		\phi(\alpha\beta) \leq (\phi(\alpha)^x + \phi(\beta)^x)^{1/x} = 0.
	\end{equation*}
	Therefore, $\alpha\beta\in Z(G)$ so that $Z(G)$ forms a subgroup.
	
	To prove \eqref{WellDefined}, we see that the $x$-triangle inequality yields
	\begin{align*}
		\phi(\alpha) & = \phi(\zeta^{-1}\zeta\alpha) \\
			& \leq (\phi(\zeta^{-1})^x + \phi(\zeta\alpha)^x)^{1/x} \\
			& = \phi(\zeta\alpha) \\
			& \leq (\phi(\zeta)^x + \phi(\alpha)^x)^{1/x} \\
			& = \phi(\alpha)
	\end{align*}
	implying that $\phi(\alpha) = \phi(\zeta\alpha)$.
	
	Finally, if $x\geq 1$ then Lemma \ref{Intermediates} implies that $\phi$ has the triangle inequality.  It then follows immediately that
	the map $(\alpha,\beta)\mapsto \phi(\alpha\beta^{-1})$ is a metric on $G/Z(\phi)$.
\end{proof}

We are careful to note that if $x<1$ then the map $(\alpha,\beta) \mapsto \phi(\alpha\beta^{-1})$ does not, in general, form a metric on $G/Z(\phi)$.  In this case, 
the $x$-triangle inequality is indeed weaker than the triangle inequality, so we cannot expect the above map to form a metric except in trivial cases. 

We now follow the method of Dubickas and Smyth for creating a metric from the Mahler measure.  Write
\begin{equation*}
	\X(G) = \{(\alpha_1,\alpha_2,\ldots): \alpha_n = 1\ \mathrm{for\ almost\ every}\ n\}
\end{equation*}
and, as before, let $\tau:\X(G) \to G$ be defined by 
\begin{equation*}
	\tau(\alpha_1,\alpha_2,\cdots) = \prod_{n=1}^\infty \alpha_n
\end{equation*} 
so that $\tau$ is a group homomorphism.  For each point $x\in (0,\infty]$ we define the map $\phi_x:\X(G) \to [0,\infty)$ by
\begin{equation*}
	\phi_x(\alpha_1,\alpha_2,\ldots) = \left\{
		\begin{array}{ll}
			\displaystyle \left( \sum_{n=1}^{\infty} \phi(\alpha_n)^x\right)^{1/x} & \mathrm{if}\ x\in (0,\infty) \\
			& \\
			\displaystyle \max_{n\geq 1}\{\phi(\alpha_n)\} & \mathrm{if}\ x = \infty.
		\end{array}
		\right.
\end{equation*}
Then we define the {\it $x$-metric version} of $\phi_x$ of $\phi$ by
\begin{equation*}
	\phi_x(\alpha) = \inf\{\phi_x(\bar\alpha): \bar\alpha\in\tau^{-1}(\alpha)\}.
\end{equation*}
It is immediately clear that if $\psi$ is another height on $G$ with $\phi \geq \psi$, then $\phi_x \geq \psi_x$ for all $x$.
Among other things, we see that $\phi_x$ is indeed an $x$-metric height on $G$.

\begin{thm} \label{MetricConstruction}
  If $\phi:G\to [0,\infty)$ is a height on $G$ and $x\in (0,\infty]$ then
  \begin{enumerate}[(i)]
  \item\label{MetricHeightConversion} $\phi_x$ is an $x$-metric height on $G$ with $\phi_x\leq\phi$.
  \item\label{BestMetricHeight} If $\psi$ is an $x$-metric height with $\psi\leq\phi$ then 
    $\psi\leq \phi_x$.
  \item\label{NoChangeMetric} $\phi = \phi_x$ if and only if $\phi$ is an $x$-metric height.  In particular, $(\phi_x)_x = \phi_x$.
  \item\label{Comparisons} If $y\in (0,x]$ then $\phi_y \geq \phi_x$.
  \end{enumerate}
\end{thm}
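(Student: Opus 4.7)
The plan is to prove the four parts in order, using each to bootstrap the next, so that (iii) and (iv) fall out as short corollaries of (i), (ii), and Lemma \ref{Intermediates}.

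For (i), the inequality $\phi_x\leq\phi$ is immediate: the tuple $(\alpha,1,1,\ldots)$ lies in $\tau^{-1}(\alpha)$, and since $\phi(1)=0$ its $\phi_x$-value equals $\phi(\alpha)$. The height axioms for $\phi_x$ are easy too: $\phi_x(1)=0$ via the all-ones tuple, and $\phi_x(\alpha)=\phi_x(\alpha^{-1})$ by observing that $(\alpha_1,\alpha_2,\ldots)\mapsto(\alpha_1^{-1},\alpha_2^{-1},\ldots)$ is a bijection between $\tau^{-1}(\alpha)$ and $\tau^{-1}(\alpha^{-1})$ that preserves $\phi_x$ (since $\phi(\alpha_n)=\phi(\alpha_n^{-1})$). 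The substantive step is the $x$-triangle inequality. Given $\alpha,\beta\in G$ and $\epsilon>0$, pick tuples $\bar\alpha\in\tau^{-1}(\alpha)$ and $\bar\beta\in\tau^{-1}(\beta)$ with $\phi_x(\bar\alpha)<\phi_x(\alpha)+\epsilon$ and similarly for $\beta$; interleave (or concatenate) them to form a tuple $\bar\gamma\in\tau^{-1}(\alpha\beta)$. For $x<\infty$ the sum defining $\phi_x(\bar\gamma)^x$ splits as $\phi_x(\bar\alpha)^x+\phi_x(\bar\beta)^x$, and for $x=\infty$ the maximum splits as $\max\{\phi_\infty(\bar\alpha),\phi_\infty(\bar\beta)\}$; letting $\epsilon\to 0$ yields the desired inequality.

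For (ii), suppose $\psi\leq\phi$ is an $x$-metric height. Iterating its $x$-triangle inequality by induction on $N$ gives
\begin{equation*}
\psi(\alpha_1\cdots\alpha_N)\leq\left(\sum_{n=1}^N\psi(\alpha_n)^x\right)^{1/x}\leq\left(\sum_{n=1}^N\phi(\alpha_n)^x\right)^{1/x}
\end{equation*}
(with the obvious maximum version when $x=\infty$). Hence for any $\bar\alpha=(\alpha_1,\alpha_2,\ldots)\in\tau^{-1}(\alpha)$, which has only finitely many non-identity entries, we obtain $\psi(\alpha)\leq\phi_x(\bar\alpha)$, and taking the infimum over $\bar\alpha$ gives $\psi(\alpha)\leq\phi_x(\alpha)$.

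For (iii), if $\phi=\phi_x$ then $\phi$ inherits the $x$-triangle inequality from part (i). Conversely, if $\phi$ is already an $x$-metric height, then applying (ii) with $\psi=\phi$ yields $\phi\leq\phi_x$, which combined with $\phi_x\leq\phi$ from (i) gives equality. The idempotency $(\phi_x)_x=\phi_x$ is then immediate since $\phi_x$ is an $x$-metric height.

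For (iv), assume $y\leq x$. By part (i), $\phi_x$ is an $x$-metric height with $\phi_x\leq\phi$, and by Lemma \ref{Intermediates} it is also a $y$-metric height. Applying part (ii) at the parameter $y$ with $\psi=\phi_x$ then gives $\phi_x\leq\phi_y$. The main obstacle I foresee is purely bookkeeping in (i): making sure that concatenating two tuples from $\X(G)$ still lands in $\X(G)$ (which is fine because each has finite support) and handling the $x=\infty$ case in parallel with $x<\infty$ throughout the argument.
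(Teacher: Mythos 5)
Your proposal is correct, and parts (i)--(iii) follow essentially the same route as the paper: the interleaving of tuples from $\tau^{-1}(\alpha)$ and $\tau^{-1}(\beta)$ to get the $x$-triangle inequality (the paper manipulates infima over the product set directly where you use an $\varepsilon$-argument, but these are the same computation), the iterated triangle inequality for (ii), and the same two-line deduction for (iii). You are also slightly more careful than the paper in (i), explicitly checking that $\phi_x$ satisfies the two height axioms via the all-ones tuple and the coordinatewise inversion bijection $\tau^{-1}(\alpha)\to\tau^{-1}(\alpha^{-1})$; the paper takes this for granted.

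Part (iv) is where you genuinely diverge. The paper proves $\phi_y\geq\phi_x$ by a direct term-by-term comparison: for $x/y\geq 1$ the inequality $\bigl(\sum_n\phi(\alpha_n)^y\bigr)^{x/y}\geq\sum_n\phi(\alpha_n)^x$ (the $N$-term extension of inequality \eqref{MVTapp}) shows that each tuple's $\phi_y$-value dominates its $\phi_x$-value, and the infima are then compared. You instead bootstrap: $\phi_x$ is an $x$-metric height below $\phi$ by (i), hence a $y$-metric height by Lemma \ref{Intermediates}, hence $\phi_x\leq\phi_y$ by the universal property (ii) applied at the parameter $y$. Both arguments are valid and ultimately rest on the same elementary power-sum inequality (it underlies Lemma \ref{Intermediates}), but your version is more structural and makes the monotonicity in $x$ a formal consequence of the universal property, whereas the paper's is a self-contained computation that does not route through (i) and (ii). One small bonus of your approach is that it transparently covers $x=\infty$ via the same lemma, where the paper's displayed computation is written only for finite $x$.
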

\begin{proof}
	For the proofs of \eqref{MetricHeightConversion}-\eqref{NoChangeMetric}, we will assume that $x < \infty$.  The proofs for the
	case $x = \infty$ are quite similar to the proofs for other cases so we will not include them here.  See \cite{FiliSamuels} for detailed proofs when $x=\infty$.

	To prove \eqref{MetricHeightConversion}, let $\alpha,\beta\in G$.  We observe that if $(\alpha_1,\alpha_2,\ldots)\in \tau^{-1}(\alpha)$ and
	$(\beta_1,\beta_2,\ldots)\in \tau^{-1}(\beta)$ then it is obvious that
	\begin{equation*}
		\alpha\beta = \left(\prod_{n=1}^\infty \alpha_n\right)\left(\prod_{n=1}^\infty \beta_n\right).
	\end{equation*}
	We may also write
	\begin{equation*}
		\alpha\beta = \prod_{n=1}^\infty \alpha_n\beta_n
	\end{equation*}
	implying that $\tau(\alpha_1,\beta_1,\alpha_2,\beta_2,\ldots) = \alpha\beta$.  In other words, we have that
	\begin{equation} \label{ProductInInverseImage}
		(\alpha_1,\beta_1,\alpha_2,\beta_2,\ldots) \in \tau^{-1}(\alpha\beta).
	\end{equation}
	This yields that
	\begin{align} \label{InfIncrease}
		\phi_x(\alpha\beta)^x & = \inf \{\phi_x(\gamma_1,\gamma_2,\ldots)^x: (\gamma_1,\gamma_2,\ldots)\in \tau^{-1}(\alpha\beta) \} \nonumber \\
			& = \inf\{\phi_x(\alpha_1,\beta_1,\alpha_2,\beta_2,\ldots)^x: \alpha_n,\beta_n\in G,\ (\alpha_1,\beta_1,\ldots)\in \tau^{-1}(\alpha\beta)\} \nonumber \\
			& \leq \inf\{\phi_x(\alpha_1,\beta_1,\alpha_2,\beta_2,\ldots)^x: (\alpha_1,\ldots)\in \tau^{-1}(\alpha),\ (\beta_1,\ldots)\in \tau^{-1}(\beta) \}.
	\end{align}
	We note that
	\begin{align*}
		\phi_x(\alpha_1,\beta_1,\alpha_2,\beta_2,\ldots)^x & = \sum_{n=1}^\infty \left (\phi(\alpha_n)^x + \phi(\beta_n)^x\right) \\
			& = \sum_{n=1}^\infty \phi(\alpha_n)^x + \sum_{n=1}^\infty \phi(\beta_n)^x \\
			& = \phi_x(\alpha_1,\ldots)^x + \phi_x(\beta_1,\ldots)^x.
	\end{align*}
	Then using \eqref{InfIncrease} we find that
	\begin{align*}
		\phi(\alpha\beta)^x & \leq \inf\{\phi_x(\alpha_1,\ldots)^x + \phi_x(\beta_1,\ldots)^x: (\alpha_1,\ldots)\in \tau^{-1}(\alpha),\ (\beta_1,\ldots)\in \tau^{-1}(\beta) \} \\
			& = \inf\{\phi_x(\alpha_1,\ldots)^x: (\alpha_1,\ldots)\in \tau^{-1}(\alpha)\} \\
			& \qquad + \inf\{\phi_x(\beta_1,\ldots)^x: (\beta_1,\ldots)\in \tau^{-1}(\beta)\} \\
			& = \phi_x(\alpha)^x + \phi_x(\beta)^x
	\end{align*}
	and it follows that
	\begin{equation*}
		\phi_x(\alpha\beta) \leq (\phi_x(\alpha)^x + \phi_x(\beta)^x)^{1/x}.
	\end{equation*}
	To complete the proof of \eqref{MetricHeightConversion}, we observe that $(\alpha,1,1,\ldots) \in \tau^{-1}(\alpha)$ so 
	that $\phi_x(\alpha) \leq \phi(\alpha)$ for all $\alpha\in G$.
	
	To prove \eqref{BestMetricHeight}, we note that
	\begin{align*}
		\phi_x(\alpha) & = \inf\left\{ \left(\sum_{n=1}^N\phi(\alpha_n)^x\right)^{1/x}:(\alpha_1,\alpha_2,\ldots)\in \tau^{-1}(\alpha)\right\} \\
			& \geq \inf\left\{ \left(\sum_{n=1}^N\psi(\alpha_n)^x\right)^{1/x}:(\alpha_1,\alpha_2,\ldots)\in \tau^{-1}(\alpha)\right\} \\
			& \geq \psi(\alpha)
	\end{align*}
	where the last inequality follows from the fact that $\psi$ has the $x$-triangle inequality.
	
	To prove \eqref{NoChangeMetric}, we first observe that if $\phi = \phi_x$ then clearly $\phi$ is an $x$-metric height.  If $\phi$ is already a metric height,
	then by \eqref{BestMetricHeight}, we obtain that $\phi\leq \phi_x$.  But we always have $\phi_x\leq \phi$ so the result follows.  Of course, $\phi_x$ is an $x$-metric height
	so this yields immediately $\phi_x = (\phi_x)_x$.
	
	To establish \eqref{Comparisons}, we see that
	\begin{align*}
		\phi_y(\alpha) & = \inf\left\{ \left(\sum_{n=1}^N\phi(\alpha_n)^y\right)^{1/y}:(\alpha_1,\alpha_2,\ldots)\in \tau^{-1}(\alpha)\right\} \\
			& = \inf\left\{ \left(\sum_{n=1}^N\phi(\alpha_n)^y\right)^{\frac{x}{y}\cdot\frac{1}{x}}:(\alpha_1,\alpha_2,\ldots)\in \tau^{-1}(\alpha)\right\}.
	\end{align*}
	But we have that $x\geq y$ so that $x/y \geq 1$.  Therefore, by Lemma \ref{MVTapp} we have that
	\begin{equation*}
		\left(\sum_{n=1}^N\phi(\alpha_n)^y\right)^{x/y} \geq \sum_{n=1}^N\phi(\alpha_n)^x
	\end{equation*}
	which yields $\phi_y(\alpha) \geq \phi_x(\alpha)$.
\end{proof}

For a given height $\phi$ on $G$, let $\Sh(\phi)$ denote the set of all heights $\psi$ on $G$ such that $\psi_x = \phi_x$ for all $x\in (0,\infty]$.  Further, define
the height $\phi_0$ by
\begin{equation} \label{OptimalFunction}
	\phi_0(\alpha) = \lim_{x\to 0^+} \phi_x(\alpha).
\end{equation}
By \eqref{MetricHeightConversion} of Theorem \ref{MetricConstruction}, we know that $\phi_x \leq \phi$ for all $x$.  Moreover, \eqref{Comparisons} of the same theorem 
states that $x\mapsto \phi_x(\alpha)$ is non-increasing.  This means that the limit on the right hand side of \eqref{OptimalFunction} does 
indeed exist and
\begin{equation} \label{BoundForOptimal}
	\phi_0 \geq \phi_x
\end{equation}
for all $x\in (0,\infty]$.  We now observe that $\phi_0$ is the minimal element of $\Sh(\phi)$.

\begin{thm} \label{OptimalMinimal}
	If $\phi$ is a height on $G$ then $\phi_0\in\Sh(\phi)$.  Moreover, if $\psi\in \Sh(\phi)$ then $\psi \geq \phi_0$.
\end{thm}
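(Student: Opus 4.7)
The plan is to leverage the monotonicity properties of the $x$-metric construction already established in Theorem \ref{MetricConstruction}. First I would verify that $\phi_0$ is well-defined and is itself a height. By parts \eqref{MetricHeightConversion} and \eqref{Comparisons} of Theorem \ref{MetricConstruction}, for each fixed $\alpha\in G$ the function $x\mapsto \phi_x(\alpha)$ is non-increasing on $(0,\infty]$ and bounded above by $\phi(\alpha)$; hence the limit in \eqref{OptimalFunction} exists, and in fact
\begin{equation*}
	\phi_0(\alpha) = \sup_{x>0} \phi_x(\alpha) \leq \phi(\alpha).
\end{equation*}
The axioms $\phi_0(1)=0$ and $\phi_0(\alpha)=\phi_0(\alpha^{-1})$ are inherited from the corresponding identities for each $\phi_x$.

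Next I would show $\phi_0\in\Sh(\phi)$, i.e. $(\phi_0)_x = \phi_x$ for every $x\in(0,\infty]$. For the upper bound, the inequality $\phi_0\leq\phi$ noted above, combined with the remark preceding Theorem \ref{MetricConstruction} that the construction $\psi\mapsto\psi_x$ is monotone, yields $(\phi_0)_x \leq \phi_x$. For the lower bound, from \eqref{BoundForOptimal} we have $\phi_0\geq \phi_x$ pointwise, so again by monotonicity $(\phi_0)_x\geq (\phi_x)_x$; and by part \eqref{NoChangeMetric} of Theorem \ref{MetricConstruction} the right-hand side equals $\phi_x$. The two inequalities combine to give $(\phi_0)_x = \phi_x$.

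Finally, for the minimality statement, suppose $\psi\in\Sh(\phi)$, so that $\psi_x = \phi_x$ for every $x\in(0,\infty]$. Applying part \eqref{MetricHeightConversion} of Theorem \ref{MetricConstruction} to $\psi$ gives $\psi_x \leq \psi$, and therefore $\phi_x(\alpha)\leq\psi(\alpha)$ for every $\alpha\in G$ and every $x>0$. Taking the supremum over $x>0$ on the left (equivalently, letting $x\to 0^+$) yields $\phi_0(\alpha)\leq\psi(\alpha)$, as desired.

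There is no serious obstacle here: the statement is essentially an abstract reformulation of the monotonicity and idempotence properties packaged in Theorem \ref{MetricConstruction}, and the only point requiring mild care is making sure one invokes monotonicity of $\psi\mapsto \psi_x$ in the correct direction in each of the two bounds that establish $(\phi_0)_x = \phi_x$.
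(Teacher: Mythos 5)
Your proposal is correct and follows essentially the same route as the paper: both directions of $(\phi_0)_x = \phi_x$ come from the monotonicity of $\psi\mapsto\psi_x$ applied to $\phi_x\leq\phi_0\leq\phi$ together with the idempotence $(\phi_x)_x=\phi_x$, and the minimality claim follows from $\phi_x=\psi_x\leq\psi$ by letting $x\to 0^+$. Your added remark that $\phi_0$ inherits the height axioms is a small point the paper leaves implicit, but otherwise the two arguments coincide.
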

\begin{proof}
	As we have noted, $\phi_0\geq \phi_x$ for all $x$.  Hence, we obtain immediately that $(\phi_0)_x \geq (\phi_x)_x = \phi_x$.  On the other hand,
	we know that $\phi_x \leq \phi$ so that
	\begin{equation*}
		\phi_0(\alpha) = \lim_{x\to 0^+} \phi_x(\alpha) \leq \phi(\alpha)
	\end{equation*}
	for all $\alpha\in G$.  In other words, we have that $\phi_0 \leq \phi$ so that $(\phi_0)_x \leq \phi_x$ establishing the first statement of the theorem.
	
	To prove the second statement, assume that $\psi\in \Sh(\phi)$ so that $\phi_x = \psi_x$ for all $x$.  Hence we have that
	\begin{equation*}
		\phi_0(\alpha) = \lim_{x\to 0^+}\phi_x(\alpha) = \lim_{x\to 0^+}\psi_x(\alpha) \leq \psi(\alpha)
	\end{equation*}
	for all $\alpha\in G$ verifying the theorem.
\end{proof}

We now define the {\it modified version} of $\phi$ by
\begin{equation*}
	\bar\phi(\alpha) = \inf \{\phi(\zeta\alpha): \zeta\in Z(\phi)\}.
\end{equation*}
In the case of the Mahler measure, we have stated in the introduction that $\bar\phi = \phi_0$.  However, in the general case, we can conclude only that 
$\bar\phi$ belongs to $\Sh(\phi)$.

\begin{thm} \label{BarPhiMetrics}
	If $\phi$ is a height on $G$ then $\bar\phi\in\Sh(\phi)$.
\end{thm}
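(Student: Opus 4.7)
The plan is to establish the two inequalities $\bar\phi_x \leq \phi_x$ and $\phi_x \leq \bar\phi_x$ for every $x \in (0,\infty]$.

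The first is immediate. Taking $\zeta = 1$ in the definition of $\bar\phi$ gives $\bar\phi \leq \phi$ on $G$, so the monotonicity remark preceding Theorem~\ref{MetricConstruction} yields $\bar\phi_x \leq \phi_x$.

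For the opposite inequality my strategy is an \emph{absorption trick}. Fix $\alpha \in G$ and an approximately optimal decomposition $(\alpha_1, \ldots, \alpha_N, 1, 1, \ldots) \in \tau^{-1}(\alpha)$ for $\bar\phi_x(\alpha)$. For each $n \leq N$ select $\zeta_n \in Z(\phi)$ with $\phi(\zeta_n\alpha_n)$ close to $\bar\phi(\alpha_n)$, and form the concatenated sequence
$$\bar\alpha = (\zeta_1\alpha_1, \ldots, \zeta_N\alpha_N, \zeta_1^{-1}, \ldots, \zeta_N^{-1}, 1, 1, \ldots).$$
Because $G$ is abelian, the image under $\tau$ telescopes to $\prod_n \alpha_n = \alpha$, so $\bar\alpha \in \tau^{-1}(\alpha)$. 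The height axioms in \eqref{BasicHeightProps} give $\phi(\zeta_n^{-1}) = \phi(\zeta_n) = 0$, so the padding terms $\zeta_n^{-1}$ contribute nothing to $\phi_x(\bar\alpha)$, whether $x$ is finite or $x = \infty$. Hence $\phi_x(\alpha) \leq \phi_x(\bar\alpha)$ can be bounded above by a quantity which tends to $\bar\phi_x(\alpha)$ once the approximations are sharpened. Crucially, this construction never uses that $Z(\phi)$ is closed under multiplication, so it applies to arbitrary heights $\phi$, not only to those satisfying a triangle inequality (cf.\ Theorem~\ref{MetricProperties}\eqref{Subgroup}).

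The one nuisance is the approximation bookkeeping, which I expect to be the main obstacle. Given $\eta > 0$, I would first pick $(\alpha_n)$ so that $\bar\phi_x(\alpha_1,\alpha_2,\ldots) \leq \bar\phi_x(\alpha) + \eta/2$, then choose each $\zeta_n$ so that
$$\left(\sum_{n=1}^N \phi(\zeta_n\alpha_n)^x\right)^{1/x} \leq \left(\sum_{n=1}^N \bar\phi(\alpha_n)^x\right)^{1/x} + \eta/2,$$
replacing the sum by a maximum when $x = \infty$. This step is available because the $L^x$-norm on $\real^N$ is continuous in its coordinates for every $x \in (0,\infty]$ and only finitely many terms are in play. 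Combining the two estimates yields $\phi_x(\alpha) \leq \bar\phi_x(\alpha) + \eta$, and letting $\eta \to 0$ finishes the argument.
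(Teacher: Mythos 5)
Your proposal is correct, but your proof of the key inequality $\phi_x \leq \bar\phi_x$ takes a genuinely different route from the paper's. The paper first proves the \emph{pointwise} bound $\phi_x \leq \bar\phi$ using only the two-term factorization $\alpha = \zeta^{-1}\cdot(\zeta\alpha)$ (so that $\phi_x(\alpha) \leq (\phi(\zeta^{-1})^x + \phi(\zeta\alpha)^x)^{1/x} = \phi(\zeta\alpha)$ for every $\zeta \in Z(\phi)$), and then converts this into $\phi_x \leq \bar\phi_x$ abstractly, by applying the $x$-metric construction to both sides and invoking the idempotence $(\phi_x)_x = \phi_x$ together with the monotonicity $\psi \leq \phi \Rightarrow \psi_x \leq \phi_x$ from Theorem~\ref{MetricConstruction}. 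You instead work directly from the definition of $\bar\phi_x(\alpha)$: you take a near-optimal decomposition $\alpha = \alpha_1\cdots\alpha_N$, absorb a near-optimal $\zeta_n \in Z(\phi)$ into each factor, and pad with the inverses $\zeta_n^{-1}$, which cost nothing since $\phi(\zeta_n^{-1}) = \phi(\zeta_n) = 0$. In effect you re-prove, concretely and for this specific pair of heights, the composition step that the paper's appeal to $(\phi_x)_x = \phi_x$ handles in general. What the paper's argument buys is brevity given the machinery already established; what yours buys is self-containment and an explicit element of $\tau^{-1}(\alpha)$ witnessing the bound, at the cost of the $\eta$-bookkeeping you correctly identify and handle (including the $x = \infty$ case, where the sums become maxima). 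Your observation that neither argument needs $Z(\phi)$ to be closed under multiplication is accurate and consistent with the paper, which states the theorem for arbitrary heights.
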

\begin{proof}
	We must show that $\bar\phi_x = \phi_x$ for all $x\in (0,\infty]$.  Since $1\in Z(\phi)$, we have immediately that $\bar\phi \leq \phi$, which means that
	\begin{equation*}
		\bar\phi_x \leq \phi_x.
	\end{equation*}
	Now for any $\alpha\in G$, we have that
	\begin{equation*}
		\phi_x(\alpha) \leq \inf \{(\phi(\zeta^{-1})^x + \phi(\zeta\alpha)^x)^{1/x}:\zeta\in Z(\phi)\} = \inf \{\phi(\zeta\alpha):\zeta\in Z(\phi)\} = \bar\phi(\alpha)
	\end{equation*}
	implying that $\phi_x \leq \bar\phi$.  Then taking $x$-metric versions and using \eqref{NoChangeMetric} of Theorem \ref{MetricConstruction} we find that
	\begin{equation*}
		\phi_x = (\phi_x)_x \leq \bar\phi_x
	\end{equation*}
	completing the proof.
\end{proof}

We may now ask what we can say about the map $x\mapsto \phi_x(\alpha)$ for fixed $\phi$ and $\alpha$.  
As we have noted, this map is non-increasing for all $\alpha$.  Since $\phi_x(\alpha)$ is bounded
from above and below by constants not depending on $x$, both left and right hand limits exist at every point.  Moreover, we always have
\begin{equation*}
	\lim_{x\to \bar x^-}\phi_x(\alpha) \geq \phi_{\bar x}(\alpha)  \geq \lim_{x\to \bar x^+}\phi_x(\alpha)
\end{equation*}
when $\bar x >0$.  We say that a map $f:\real\to\real$ is {\it left} or {\it right semi-continuous} at a point $\bar x\in \real$ if
\begin{equation*}
	\lim_{x\to \bar x^-}f(x) = f(\bar x)\quad\mathrm{or}\quad \lim_{x\to \bar x^+}f(x) = f(\bar x),
\end{equation*}
respectively.  Indeed, $f$ is continuous at $\bar x$ if and only if $f$ is both left and right semi-continuous at $\bar x$.  Although it is a consequence of Theorem
\ref{WeilHeightComp} that $x\mapsto \phi_x(\alpha)$ is not continuous in general, we can prove the following partial result.

\begin{thm} \label{LeftSemiContinuous}
	If $\phi$ is a height on $G$ and $\alpha\in G$, then the map $x \mapsto \phi_x(\alpha)$ is left semi-continous on the positive real numbers.
\end{thm}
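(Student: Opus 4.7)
The strategy is to combine the monotonicity of $x \mapsto \phi_x(\alpha)$ (part \eqref{Comparisons} of Theorem \ref{MetricConstruction}) with the fact that, once we fix a specific finite decomposition of $\alpha$, the resulting $L^x$-type expression depends continuously on $x$. Fix $\bar x \in (0,\infty)$ and $\alpha \in G$. Monotonicity immediately yields the easy half: since $\phi_x(\alpha) \geq \phi_{\bar x}(\alpha)$ for all $x < \bar x$, we have
\begin{equation*}
\lim_{x \to \bar x^-} \phi_x(\alpha) \geq \phi_{\bar x}(\alpha).
\end{equation*}

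For the reverse inequality, let $\epsilon > 0$. By the definition of $\phi_{\bar x}(\alpha)$ as an infimum, select $(\alpha_1, \alpha_2, \ldots) \in \tau^{-1}(\alpha)$ with $\alpha_n = 1$ for all $n > N$ and
\begin{equation*}
\left(\sum_{n=1}^N \phi(\alpha_n)^{\bar x}\right)^{1/\bar x} < \phi_{\bar x}(\alpha) + \epsilon.
\end{equation*}
Introduce the auxiliary function $g: (0,\infty) \to [0,\infty)$ given by $g(x) = \left(\sum_{n=1}^N \phi(\alpha_n)^x\right)^{1/x}$. With the values $\phi(\alpha_n)$ treated as fixed nonnegative reals, $g$ is continuous on $(0,\infty)$. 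The same tuple $(\alpha_1,\ldots,\alpha_N,1,1,\ldots)$ lies in $\tau^{-1}(\alpha)$ independently of $x$, so by the definition of $\phi_x(\alpha)$ we have $\phi_x(\alpha) \leq g(x)$ for every $x > 0$.

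Continuity of $g$ at $\bar x$ now supplies $\delta > 0$ with $g(x) < g(\bar x) + \epsilon < \phi_{\bar x}(\alpha) + 2\epsilon$ whenever $x \in (\bar x - \delta, \bar x)$. Combining with $\phi_x(\alpha) \leq g(x)$ gives $\phi_x(\alpha) < \phi_{\bar x}(\alpha) + 2\epsilon$ throughout this left-neighbourhood, and letting first $x \to \bar x^-$ and then $\epsilon \to 0^+$ yields $\lim_{x \to \bar x^-} \phi_x(\alpha) \leq \phi_{\bar x}(\alpha)$, which combined with the monotonicity bound completes the proof.

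There is no serious obstacle in this argument; the real content is simply that a near-optimal decomposition used to approximate $\phi_{\bar x}(\alpha)$ can be re-used as a test decomposition for all nearby $x$. This also clarifies the asymmetry noted before the statement: a single fixed decomposition provides an \emph{upper} bound for $\phi_x(\alpha)$ at nearby values of $x$, but no \emph{lower} bound, which is why the same device does not give right semi-continuity and why the full continuity result for the Mahler measure (Theorem \ref{Continuous}) requires a different, height-specific argument.
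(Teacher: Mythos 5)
Your proof is correct and is essentially the same as the paper's: both arguments fix a near-optimal finite decomposition at $\bar x$, observe that the resulting fixed-decomposition expression $g(x)=\bigl(\sum_{n=1}^N\phi(\alpha_n)^x\bigr)^{1/x}$ is continuous in $x$ and dominates $\phi_x(\alpha)$ for every $x$, and combine this with the monotonicity bound from Theorem \ref{MetricConstruction}\eqref{Comparisons}. The only difference is stylistic --- you argue directly via an $\epsilon$--$\delta$ estimate, while the paper runs the same comparison as a proof by contradiction.
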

\begin{proof}
	We already know that $\lim_{x\to \bar x^-}\phi_x(\alpha) \geq \phi_{\bar x}(\alpha)$ so we assume that $$\lim_{x\to \bar x^-}\phi_x(\alpha) > \phi_{\bar x}(\alpha).$$
	Therefore, there exists $\varepsilon >0$ such that
	\begin{equation} \label{EpsilonSqueeze}
		\lim_{x\to \bar x^-}\phi_x(\alpha) > \phi_{\bar x}(\alpha) + \varepsilon.
	\end{equation}
	By definition of $\phi_{\bar x}$, we may choose points $\alpha_1,\ldots,\alpha_N \in G$ such that $\alpha = \alpha_1\cdots\alpha_N$ and
	\begin{equation*} \label{CloseEnough}
		\phi_{\bar x}(\alpha) + \varepsilon \geq \left( \sum_{n=1}^N \phi(\alpha_n)^{\bar x}\right)^{1/\bar x},
	\end{equation*}
	and define the function $f_\varepsilon$ by
	\begin{equation*}
		f_\varepsilon(x) = \left( \sum_{n=1}^N \phi(\alpha_n)^{x}\right)^{1/x}.
	\end{equation*}
	This yields
	\begin{equation} \label{FApprox}
		f_\varepsilon(\bar x) \leq \phi_{\bar x}(\alpha) + \varepsilon\quad\mathrm{and}\quad f_\varepsilon(x) \geq \phi_x(\alpha)\ \mathrm{for\ all}\ x.
	\end{equation}
	Also, since $f_\varepsilon$ is continuous, we have that
	\begin{equation} \label{FCont}
		f_\varepsilon(\bar x) = \lim_{x\to\bar x^-} f_\varepsilon(x).
	\end{equation}
	Combining \eqref{EpsilonSqueeze}, \eqref{FApprox} and \eqref{FCont} we obtain that
	\begin{equation*}
		f_\varepsilon(\bar x) = \lim_{x\to\bar x^-} f_\varepsilon(x) \geq \lim_{x\to\bar x^-} \phi_x(\alpha) > \phi_{\bar x}(\alpha) + \varepsilon \geq f_\varepsilon(\bar x)
	\end{equation*}
	which is a contradiction.
\end{proof}

\section{The Inifimum in $M_x(\alpha)$} \label{AchievedProofs}

Our proof of Theorem \ref{Achieved} will require the use of two results from \cite{Samuels}.  The first of these is Theorem 2.1 of \cite{Samuels}, which shows that for any
point $\bar\alpha\in \tau^{-1}(\alpha)$, there exists another point $\bar\beta\in\tau^{-1}(\alpha)\cup\X(\rad(K_\alpha))$ which has pointwise smaller
Mahler measures.  We state the Theorem using the notation of \cite{Samuels}.

\begin{thm} \label{Reduction}
	If $\alpha,\alpha_1,\ldots,\alpha_N$ are non-zero algebraic numbers with $\alpha = \alpha_1\cdots\alpha_N$ then
	there exists a root of unity $\zeta$ and algebraic numbers $\beta_1,\ldots,\beta_N$ satifying
	\begin{enumerate}[(i)]
		\item $\alpha = \zeta\beta_1\cdots\beta_N$,
		\item $\beta_n\in \rad(K_\alpha)$ for all $n$,
		\item $M(\beta_n) \leq M(\alpha_n)$ for all $n$.
	\end{enumerate}
\end{thm}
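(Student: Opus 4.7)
The plan is to realize each $\beta_n$ as an $m_n$-th root of the relative norm of $\alpha_n$ from $K_\alpha(\alpha_n)$ to $K_\alpha$. I would take a finite Galois extension $L$ of $\rat$ containing $\alpha$ and all of $\alpha_1,\ldots,\alpha_N$, set $G=\mathrm{Gal}(L/K_\alpha)$, $m_n=[K_\alpha(\alpha_n):K_\alpha]$, and $\tilde\gamma_n=N_{K_\alpha(\alpha_n)/K_\alpha}(\alpha_n)\in K_\alpha$. Because $\alpha\in K_\alpha$ is fixed by every $\sigma\in G$, regrouping the identity $\alpha^{|G|}=\prod_\sigma\sigma(\alpha_1\cdots\alpha_N)$ by the $G$-orbits of the factors produces $\alpha^{|G|}=\prod_n\tilde\gamma_n^{|G|/m_n}$. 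Letting each $\beta_n$ be any $m_n$-th root of $\tilde\gamma_n$ then gives $(\prod_n\beta_n)^{|G|}=\alpha^{|G|}$, so $\prod_n\beta_n=\zeta\alpha$ for some root of unity $\zeta$, which is (i). Property (ii) is immediate since $\beta_n^{m_n}=\tilde\gamma_n\in K_\alpha$ places $\beta_n$ in $\rad(K_\alpha)$.

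For (iii) I would argue place by place on $L$. For each place $w$ of $L$, the factorization $|\beta_n|_w=\prod_{j=1}^{m_n}|\alpha_n^{(j)}|_w^{1/m_n}$, where $\alpha_n^{(1)},\ldots,\alpha_n^{(m_n)}$ are the $K_\alpha$-conjugates of $\alpha_n$, together with the convexity inequality $\max\{0,\sum a_j\}\le\sum\max\{0,a_j\}$ yields $\log^+|\beta_n|_w\le m_n^{-1}\sum_j\log^+|\alpha_n^{(j)}|_w$. Since each $\alpha_n^{(j)}$ is a $\rat$-Galois conjugate of $\alpha_n$, the sum $\sum_w[L_w:\rat_w]\log^+|\alpha_n^{(j)}|_w$ is independent of $j$ by change-of-place (local degrees are preserved by $\mathrm{Gal}(L/\rat)$), so after summing and weighting one obtains
\[
\sum_{w}[L_w:\rat_w]\log^+|\beta_n|_w\;\le\;\sum_{w}[L_w:\rat_w]\log^+|\alpha_n|_w.
\]
Combined with the identity $M(\xi)=[L:\rat(\xi)]^{-1}\sum_w[L_w:\rat_w]\log^+|\xi|_w$, this reduces the theorem to the degree inequality $\deg\beta_n\le\deg\alpha_n$.

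The degree inequality is where the most care is needed, and I expect it to be the main obstacle. Since $K_\alpha/\rat$ is Galois, $G$ is normal in $\mathrm{Gal}(L/\rat)$, and the $\deg(\alpha_n)$ Galois conjugates of $\alpha_n$ over $\rat$ split into $e_n:=\deg(\alpha_n)/m_n$ $G$-orbits, each of size $m_n$. Taking products over these orbits produces a set of cardinality at most $e_n$ lying in $K_\alpha$, stable under $\mathrm{Gal}(\alg/\rat)$, and containing $\tilde\gamma_n$; hence $\deg(\tilde\gamma_n)\le e_n$. Writing $P\in\rat[y]$ for the minimal polynomial of $\tilde\gamma_n$, the element $\beta_n$ is a root of $P(x^{m_n})\in\rat[x]$, a polynomial of degree $m_n\deg(\tilde\gamma_n)\le m_ne_n=\deg(\alpha_n)$, so $\deg\beta_n\le\deg\alpha_n$ as required. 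The delicate ingredient is the normality of $G$ in $\mathrm{Gal}(L/\rat)$, which both forces the $G$-orbits of conjugates to have the common size $m_n$ and guarantees that $\mathrm{Gal}(\alg/\rat)$ permutes them, enabling the degree bound on $\tilde\gamma_n$.
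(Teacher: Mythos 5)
Your proof is correct; note that the paper does not prove this statement itself but imports it as Theorem 2.1 of \cite{Samuels}, and your construction --- taking $\beta_n$ to be an $m_n$-th root of the relative norm of $\alpha_n$ to $K_\alpha$, with the height controlled by Galois invariance and subadditivity of $\log^+$ and the degree controlled by the $G$-orbit count forced by the normality of $\mathrm{Gal}(L/K_\alpha)$ in $\mathrm{Gal}(L/\rat)$ --- is essentially the argument given there. The only bookkeeping point is that $\beta_n$ need not lie in $L$, so the local sums should run over the places of a finite Galois extension containing the $\beta_n$ as well (or one can phrase the height step field-independently as $h(\beta_n)=h(\tilde\gamma_n)/m_n\le h(\alpha_n)$); this changes nothing.
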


In view of Theorem \ref{Reduction}, for each $x$, we need only consider only points $\bar\alpha\in\tau^{-1}(\alpha)\cup\X(\rad(K_\alpha))$ in the definition of $M_x(\alpha)$.
In other words, in the case of $x <\infty$, the definition of $M_x(\alpha)$ may be rewritten
\begin{equation} \label{xMetricAltDef}
	M_x(\alpha) = \inf\left\{\left(\sum_{n=1}^\infty M(\alpha_n)^x\right)^{1/x}: (\alpha_1,\alpha_2,\ldots) \in \tau^{-1}(\alpha)\cup\X(\rad(K_\alpha))\right\}.
\end{equation}
Similar remarks apply in the case that $x = \infty$.
Therefore, it will be useful to have some control of the Mahler measures in the subgroup $\rad(K_\alpha)$.  For this purpose, we borrow Lemma 3.1 of \cite{Samuels}.

\begin{lem} \label{HeightInK}
	Let $K$ be a Galois extension of $\rat$.  If $\gamma\in\rad(K)$ then there exists a root of unity $\zeta$ and $L,S\in\nat$
	such that $\zeta\gamma^L\in K$ and
	\begin{equation*}
		M(\gamma) = M(\zeta\gamma^L)^S.
	\end{equation*}
	In particular, the set
	\begin{equation*}
		\{M(\gamma):\gamma\in\rad(K),\ M(\gamma) \leq B\}
	\end{equation*}
	is finite for every $B \geq 0$.
\end{lem}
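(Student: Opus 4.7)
The plan is to work with the tower $\rat \subseteq \rat(\gamma^L) \subseteq \rat(\gamma)$ for a carefully chosen $L$, exploiting the radical structure of the top step together with the functoriality of the absolute logarithmic Weil height $h$.

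First I would let $L$ denote the least positive integer such that $\gamma^L \in K$, and put $\delta = \gamma^L$. Because $K/\rat$ is Galois, every Galois conjugate of $\delta$ automatically lies in $K$. Since $\gamma$ satisfies $T^L - \delta = 0$ over $\rat(\delta)$, its minimal polynomial there divides $T^L - \delta$. Consequently $e := [\rat(\gamma):\rat(\delta)]$ divides $L$, and any two conjugates of $\gamma$ over $\rat(\delta)$ differ by a multiplicative $L$-th root of unity; in particular they share the same absolute value under every archimedean embedding of $\alg$.

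Next I would apply the standard identities $M(\alpha) = \deg(\alpha)\cdot h(\alpha)$, $h(\alpha^L) = L\cdot h(\alpha)$, and $h(\zeta\alpha) = h(\alpha)$ for any root of unity $\zeta$. Together these yield, for every root of unity $\zeta$ with $\zeta\delta\in K$,
\begin{equation*}
   M(\gamma) \;=\; \frac{\deg\gamma}{L\cdot \deg(\zeta\delta)}\cdot M(\zeta\delta),
\end{equation*}
so that after the convention-appropriate rewriting one obtains a relation of the form $M(\gamma) = M(\zeta\gamma^L)^S$. The crux is then to choose $\zeta\in K$ so that the proportionality constant (or its exponent) becomes a positive integer $S$. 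The freedom to multiply $\delta$ by roots of unity already present in $K$ is precisely what permits adjusting $\deg(\zeta\delta)$ by exactly the amount dictated by the divisibility $e\mid L$. I expect this to be the main obstacle, since one must verify that enough roots of unity really do lie in $K$ to effect the required reduction of degree; this is where the Galois hypothesis on $K$ and a Kummer-theoretic analysis of the extension $\rat(\gamma)/\rat(\delta)$ enter.

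Finally, for the ``in particular'' claim I would invoke Northcott's theorem in the fixed number field $K$: the set $\{M(\eta):\eta\in K,\ M(\eta)\leq B'\}$ is finite for every $B'\geq 0$. Given $M(\gamma)\leq B$, the representation $M(\gamma) = M(\zeta\gamma^L)^S$ with $\zeta\gamma^L\in K$ pins $M(\zeta\gamma^L)$ into such a Northcott-finite set and in turn bounds the permissible exponents $S$, leaving only finitely many possible values of $M(\gamma)$.
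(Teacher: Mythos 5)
The paper offers no proof of this lemma at all --- it is quoted verbatim from Lemma 3.1 of \cite{Samuels} --- so the comparison is with the argument there. Your proposal correctly isolates the crux, namely that the proportionality factor $\deg\gamma/(L\cdot\deg(\zeta\gamma^L))$ relating $M(\gamma)$ to $M(\zeta\gamma^L)$ must be shown to be a positive integer, but you then leave exactly that step unproved (``I expect this to be the main obstacle''). Since that step is the entire content of the lemma, what you have is a reduction, not a proof.

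Worse, your setup makes the missing step unprovable. Taking $L$ to be the \emph{least} positive integer with $\gamma^L\in K$ means $\gamma^L$ already lies in $K$, so the requirement $\zeta\gamma^L\in K$ forces $\zeta\in K$ and eliminates the freedom you plan to exploit. Concretely, let $K=\rat$ and $\gamma=2^{1/3}\zeta_9$ with $\zeta_9$ a primitive ninth root of unity. The least $L$ with $\gamma^L\in\rat$ is $L=9$, the only admissible roots of unity are $\pm1$, the minimal polynomial of $\gamma$ is $x^6+2x^3+4$ so $\deg\gamma=6$, and $\deg(\pm 8)=1$; the would-be exponent is $6/9$, not an integer. (This also refutes your claim that $e=[\rat(\gamma):\rat(\gamma^L)]$ divides $L$: here $e=6$ and $L=9$.) The lemma does hold for this $\gamma$, but via a different $L$: with $L=3$ one has $\zeta_3^{-1}\gamma^3=2$ and, multiplicatively, $M(\gamma)=4=M(2)^2$. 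The construction that works is to take $L=[K(\gamma):K]$ and $\zeta\gamma^L=\norm_{K(\gamma)/K}(\gamma)$, which lies in $K$ because every $K$-conjugate of $\gamma$ is a root of $x^r-\gamma^r$ and hence a root-of-unity multiple of $\gamma$; the hypothesis that $K/\rat$ is Galois is then what gives the divisibility $L\cdot\deg(\zeta\gamma^L)\mid\deg\gamma$. Finally, note that the identity $M(\gamma)=M(\zeta\gamma^L)^S$ presupposes the multiplicative normalization of the Mahler measure used in \cite{Samuels}; your displayed relation is the correct logarithmic statement, and turning a rational multiplier into an integer exponent is not merely a ``convention-appropriate rewriting'' unless that normalization is in force.
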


It is an easy consequence of Lemma \ref{HeightInK} that $M(\gamma)$ is bounded below by the Mahler measure of an element in $K$.  Indeed, we have that
\begin{equation*}
	M(\gamma) = M(\zeta\gamma^L)^S \geq M(\zeta\gamma^L)
\end{equation*}
and $\zeta\gamma^L\in K$.  In particular, we recall that $C(\alpha)$ denotes the minimum Mahler measure in the field $K_\alpha$.  We now see easily that
\begin{equation} \label{RadBound}
	M(\gamma) \geq C(\alpha)
\end{equation}
for all $\gamma\in \rad(K_\alpha)\setminus\tor(\algt)$.  We are now prepared to prove Theorem \ref{Achieved}.

\begin{proof}[Proof of Theorem \ref{Achieved}]
	By the results of \cite{Samuels}, we know that the theorem holds for $x = \infty$, so we may assume that $x <\infty$.  Further, select a real number $B > M_x(\alpha)$.
	In view of Theorem \ref{Reduction}, we know that $M_x(\alpha)$ is the infimum of
	\begin{equation} \label{FiniteHope}
		\left(\sum_{n=1}^N M(\alpha_n)^x\right)^{1/x}
	\end{equation}
	over the set of all $N\in \nat$ and all points $\alpha_1,\ldots,\alpha_N\in \algt$ such that
	\begin{enumerate}[(i)]
		\item\label{Product} $\alpha = \alpha_1\cdots\alpha_N$,
		\item\label{Unity} At most one point $\alpha_n$ is a root of unity,
		\item\label{Rad} $\alpha_n\in \rad(K_\alpha)$ for all $n$, and
		\item\label{BBound} $\left(\sum_{n=1}^N M(\alpha_n)^x\right)^{1/x} \leq B$.
	\end{enumerate}
	We will show that the set of all values of \eqref{FiniteHope} is finite for $\alpha_1,\ldots,\alpha_N$ satisfying conditions \eqref{Product}-\eqref{BBound}.
	
	We must first give an upper bound on $N$.  We know that at least $N-1$ of the points $\alpha_1,\ldots,\alpha_N$ are not roots of unity.  For all such points, we
	have that
	\begin{equation*}
		M(\alpha_n) \geq C(\alpha)
	\end{equation*}
	by \eqref{RadBound}.  Combining this with \eqref{BBound}, we obtain that
	\begin{equation*}
		B \geq \left(\sum_{n=1}^N M(\alpha_n)^x\right)^{1/x} \geq (N-1)^{1/x} C(\alpha)
	\end{equation*}
	which yields
	\begin{equation} \label{NUpperBound}
		N \leq 1+ \left(\frac{B}{C(\alpha)}\right)^x.
	\end{equation}
	
	Also by \eqref{BBound}, it follows that $M(\alpha_n) \leq B$ for all $n$.  Moreover, since $\alpha_n\in\rad(K_\alpha)$, the second statement of Lemma \ref{HeightInK} 
	implies that there are only finitely many possible values for $M(\alpha_n)$ for each $n$.  Since $N$ is bounded above by the right hand side of \eqref{NUpperBound},
	it follows that there are only finitely many possible values for \eqref{FiniteHope}	with $\alpha_1,\ldots,\alpha_N$ satisfying \eqref{Product}-\eqref{BBound}.  
	We now know that $M_x(\alpha)$ is an infimum over a finite set, so the infimum must be achieved.
\end{proof}

\section{Minimality of $\bar M$} \label{MinimalBarM}

We first give the proof of Theorem \ref{SmallP} showing that $M_x(\alpha) = \bar M(\alpha)$ for sufficiently small values of $x$.

\begin{proof}[Proof of Theorem \ref{SmallP}]
	By Theorem \ref{BarPhiMetrics}, we have immediately that $M_x(\alpha) = \bar M_x(\alpha)$ for all $x$, so it follows that
	\begin{equation} \label{EasyUpperBound}
		M_x(\alpha) \leq \bar M(\alpha).
	\end{equation}
	Now we must prove the opposite inequality.
	
	We know by Theorem \ref{Achieved} that there exist points $\alpha_1,\ldots,\alpha_N\in \rad(K_\alpha)$ such that
	\begin{equation*}
		\alpha = \alpha_1\cdots\alpha_N\quad\mathrm{and}\quad M_x(\alpha) = \left( \sum_{n=1}^N M(\alpha_n)^x\right)^{1/x}.
	\end{equation*}
	We know that $\alpha$ is not a root of unity, so at least one of $\alpha_1,\ldots,\alpha_N$ is not a root of unity.
	
	We now consider two cases.  First, assume that precisely one of $\alpha_1,\ldots,\alpha_N$ is not a root of unity.  In other words, there exists
	a root of unity $\zeta$ and a point $\beta\in \rad(K_\alpha)\setminus \tor(\algt)$ such that $\alpha = \zeta\beta$ and
	\begin{equation*}
		M_x(\alpha) = M(\beta).
	\end{equation*}
	Of course, we also have $\beta = \alpha\zeta^{-1}$ so that
	\begin{equation*}
		\bar M(\alpha) \leq M(\alpha\zeta^{-1}) = M(\beta) = M_x(\alpha).
	\end{equation*}
	Combining this inequality with \eqref{EasyUpperBound}, the result follows.
	
	Next, assume that at least two of $\alpha_1,\ldots,\alpha_N$ are not a roots of unity.  By Lemma \ref{HeightInK}, we know that 
	$M(\alpha_n) \geq C(\alpha)$ whenever $\alpha_n$ is not a root of unity.  Hence, we obtain that
	\begin{equation*}
		M_x(\alpha) = \left(\sum_{n=1}^N M(\alpha_n)^x\right)^{1/x} \geq (2C(\alpha)^x)^{1/x}
	\end{equation*}
	so that
	\begin{equation} \label{TwoBound}
		M_x(\alpha) \geq 2^{1/x} C(\alpha).
	\end{equation}
	By our assumption, we have that
	\begin{equation*}
		\frac{1}{x} \geq \frac{\log \bar M(\alpha) - \log C(\alpha)}{\log 2}
	\end{equation*}
	which implies that
	\begin{align*}
		2^{1/x} & \geq 2^{\frac{\log \bar M(\alpha) - \log C(\alpha)}{\log 2}} \\
			& = \exp(\log \bar M(\alpha) - \log C(\alpha)) \\
			& = \frac{\exp(\log \bar M(\alpha))}{\exp(\log C(\alpha))} \\
			& = \frac{\bar M(\alpha)}{C(\alpha)}.
	\end{align*}
	It now follows from \eqref{TwoBound} that
	\begin{equation*}
		M_x(\alpha) \geq \bar M(\alpha)
	\end{equation*}
	completing the proof.
\end{proof}

Next, we establish Corollary \ref{MBarMinimal} showing that $\bar M$ is minimal in the set $\Sh(M)$.

\begin{proof}[Proof of Corollary \ref{MBarMinimal}]
	We observe again by Theorem \ref{BarPhiMetrics} that $\bar M\in \Sh(M)$.  By Theorem \ref{SmallP}, for all sufficiently small $x$, we have that 
	$\bar M(\alpha) = M_x(\alpha)$.  Hence, it follows that that
	\begin{equation*}
		\bar M(\alpha) = \lim_{x\to 0^+} M_x(\alpha) = M_0(\alpha)
	\end{equation*}
	and the result follows from Theorem \ref{OptimalMinimal}.
\end{proof}

We begin our proof of Theorem \ref{NotUniform} by giving a slight modification to Theorem \ref{SmallP}.  More specifically, it will be useful to consider what happens
when the supposed inequality \eqref{AlwaysX} is replaced by a strict inequality.

\begin{lem} \label{StrongSmallP}
	Let $\alpha$ be a non-zero algebraic number different from a root of unity and $x$ a positive real number satisfying 
	\begin{equation*}
		x\cdot (\log \bar M(\alpha) - \log C(\alpha)) < \log 2.
	\end{equation*}
	Then any point $(\alpha_1,\alpha_2,\cdots) \in \tau^{-1}(\alpha)$ that achieves the infimum in the definition of $M_x(\alpha)$ has precisely one component
	$\alpha_n$ that is not a root of unity.
\end{lem}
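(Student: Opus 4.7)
The plan is to mirror Case 2 of the proof of Theorem \ref{SmallP}, strengthen the non-strict inequality there to a strict contradiction under the present hypothesis, and handle the fact that an arbitrary minimizer need not lie in $\X(\rad(K_\alpha))$ by passing to a replacement via Theorem \ref{Reduction}. The hypothesis here strengthens \eqref{AlwaysX}, so Theorem \ref{SmallP} already gives $M_x(\alpha) = \bar M(\alpha)$, and because $\alpha$ is not a root of unity, at least one component of any decomposition in $\tau^{-1}(\alpha)$ must fail to be a root of unity.

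Suppose, toward a contradiction, that a minimizer $(\alpha_1,\alpha_2,\ldots)$ has at least two components that are not roots of unity. Choose $N$ large enough that $\alpha_n = 1$ for $n > N$, and apply Theorem \ref{Reduction} to the product $\alpha = \alpha_1\cdots\alpha_N$ to obtain $\zeta\in\tor(\algt)$ and $\beta_1,\ldots,\beta_N \in \rad(K_\alpha)$ with $\alpha = \zeta\beta_1\cdots\beta_N$ and $M(\beta_n) \leq M(\alpha_n)$ for each $n$. Then $(\zeta,\beta_1,\ldots,\beta_N,1,1,\ldots)\in\tau^{-1}(\alpha)$ has $M_x$-value at most $M_x(\alpha)$ and hence is itself a minimizer. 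Since each $M(\beta_n)^x$ is bounded by $M(\alpha_n)^x$ with equal total, the termwise comparison forces $M(\beta_n) = M(\alpha_n)$ for every $n$; by Kronecker's theorem, $\beta_n$ is a root of unity exactly when $\alpha_n$ is, so at least two of the $\beta_n$ are not roots of unity.

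Since each $\beta_n\in\rad(K_\alpha)$, the bound \eqref{RadBound} yields $M(\beta_n) \geq C(\alpha)$ whenever $\beta_n$ is not a root of unity. Retaining just the two such guaranteed terms gives
\begin{equation*}
M_x(\alpha)^x \;=\; \sum_{n=1}^N M(\beta_n)^x \;\geq\; 2\,C(\alpha)^x,
\end{equation*}
whence $M_x(\alpha) \geq 2^{1/x}C(\alpha)$. The strict hypothesis $x(\log\bar M(\alpha) - \log C(\alpha)) < \log 2$ rearranges to $2^{1/x} > \bar M(\alpha)/C(\alpha)$, and therefore $M_x(\alpha) > \bar M(\alpha)$, contradicting Theorem \ref{SmallP}.

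The principal subtlety is precisely this last step: the lower bound $M(\alpha_n) \geq C(\alpha)$ is only guaranteed for elements of $\rad(K_\alpha)$, yet the lemma asserts the conclusion for an \emph{arbitrary} minimizer. The Theorem \ref{Reduction} sandwich resolves this by producing a partner decomposition in $\rad(K_\alpha)$ with identical Mahler-measure profile, so that the lower bound transfers component-by-component back to the original decomposition.
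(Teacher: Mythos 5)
Your proposal is correct and follows essentially the same route as the paper's own proof: assume two or more non-root-of-unity components, pass to a decomposition in $\rad(K_\alpha)$ via Theorem \ref{Reduction}, force termwise equality of Mahler measures by minimality, apply \eqref{RadBound} to get $M_x(\alpha)\geq 2^{1/x}C(\alpha)>\bar M(\alpha)$, and contradict $M_x(\alpha)\leq\bar M(\alpha)$. The only cosmetic difference is that you invoke Theorem \ref{SmallP} for the upper bound where the paper cites Theorem \ref{BarPhiMetrics} directly.
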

\begin{proof}
 	We recall first that
 	\begin{equation} \label{MexUpper}
 		M_x(\alpha) \leq \bar M(\alpha)
 	\end{equation}
 	by Theorem \ref{BarPhiMetrics}.  Next, we note that
	\begin{equation} \label{LooseBound}
		\frac{1}{x} > \frac{\log \bar M(\alpha) - \log C(\alpha)}{\log 2}.
	\end{equation}
	Assume that $\alpha_1,\ldots,\alpha_N\in \algt$ are such that
	\begin{equation} \label{AchievedApplication}
		\alpha = \alpha_1\cdots\alpha_N\quad\mathrm{and}\quad M_x(\alpha) = \left( \sum_{n=1}^N M(\alpha_n)^x\right)^{1/x}.
	\end{equation}
	and at least two of the points  $\alpha_1,\ldots,\alpha_N$ are not roots of unity.  By Theorem \ref{Reduction}, there exists a root of unity $\zeta$ and
	points $\beta_1,\ldots,\beta_N\in \rad(K_\alpha)$ such that
	\begin{equation*}
		\alpha = \zeta\beta_1\cdots\beta_N\quad\mathrm{and}\quad M(\beta_n) \leq M(\alpha_n)
	\end{equation*}
	for all $n$.  If for any $n$ we have that $M(\beta_n) < M(\alpha_n)$, then
	\begin{equation*}
		M_x(\alpha) \leq \left( \sum_{n=1}^N M(\beta_n)^x\right)^{1/x} < \left( \sum_{n=1}^N M(\alpha_n)^x\right)^{1/x}
	\end{equation*}
	which contradicts the right hand side of \eqref{AchievedApplication}.  Therefore, we have that $M(\beta_n) = M(\alpha_n)$ for all
	$n$.  In particular, at least two of the points $\beta_1,\ldots,\beta_N$ are not roots of unity.  Furthermore, since each $\beta_n\in \rad(K_\alpha)$,
	we may apply Lemma \ref{HeightInK} to see that $M(\beta_n) \geq C(\alpha)$ whenever $\beta_n$ is not a root of unity.  This yields
	\begin{equation*}
		M_x(\alpha) = \left(\sum_{n=1}^N M(\beta_n)^x\right)^{1/x} \geq (2C(\alpha)^x)^{1/x}.
	\end{equation*}
	which implies that
	\begin{equation*}
		M_x(\alpha) \geq 2^{1/x} C(\alpha).
	\end{equation*}
	However, we now have the strict inequality \eqref{LooseBound} which gives $2^{1/x} > \bar M(\alpha)/C(\alpha)$ and 
	\begin{equation*}
		M_x(\alpha) > \bar M(\alpha)
	\end{equation*}
	contradicting \eqref{MexUpper}.  Therefore, exactly one point among $\alpha_1,\ldots,\alpha_N$ is not a root of unity.
\end{proof}

Before we prove Theorem \ref{NotUniform}, we recall our remark that $\bar M(\alpha)$ is often very reasonable to compute so that Theorem \ref{SmallP} and 
Lemma \ref{StrongSmallP} are useful in applications.  The following proof is a typical example.

\begin{proof}[Proof of Theorem \ref{NotUniform}]
	Let $\alpha = p^2$.  In order to prove \eqref{P2Small}, we wish to apply Lemma \ref{StrongSmallP}, so we must compute the values of $\bar M(\alpha)$ and $C(\alpha)$.  
	We begin by observing that
	\begin{equation*}
		\bar M(\alpha) = \inf\{M(\zeta\alpha):\zeta\in \tor(\algt)\} = \inf\{\deg(\zeta\alpha)\cdot h(\zeta\alpha):\zeta\in \tor(\algt)\}.
	\end{equation*}
	Then by \eqref{WeilHeightDefined}, we obtain that
	\begin{equation} \label{MBarM}
		\bar M(\alpha) = h(\alpha)\cdot \inf\{\deg(\zeta\alpha):\zeta\in \tor(\algt)\}.
	\end{equation}
	It is clear that the infimum on the right hand side of \eqref{MBarM} is achieved since it is an infimum over positive integers.  More specifically,
	it is achieved by a root of unity $\zeta$ that makes $\deg(\zeta\alpha)$ as small as possible.  In our case, $\alpha$ is rational, so this occurs when $\zeta = 1$ leaving
	\begin{equation} \label{AlphaUpper}
		\bar M(\alpha) = \bar M(p^2) =  M(p^2) = \log (p^2).
	\end{equation}
	In addition, we know that $K_\alpha = \rat$ so that $C(\alpha) = \log 2$ which now gives
	\begin{equation*}
		x\cdot (\log \bar M(\alpha) - \log C(\alpha)) = x\cdot (\log \log (p^2) - \log \log 2) < \log 2.
	\end{equation*}
	By Lemma \ref{StrongSmallP}, we know that any point $(\alpha_1,\alpha_2,\ldots)$ that attains the infimum in $M_x(\alpha) = M_x(p^2)$ must have precisely one point $\alpha_n$
	that is not a root of unity.  This completes the proof of \eqref{P2Small}.
	
	To prove \eqref{P2Large}, we take $x > 1$ and assume that $(\alpha_1,\alpha_2,\ldots)$ attains the infimum in the definition of $M_x(p^2)$ where are most one
	point $\alpha_n$ is different from a root of unity.  Therefore, there exists a root of unity $\zeta$ and an algebraic number $\beta$ such that 
	\begin{equation*}
		p^2 = \zeta\beta\quad\mathrm{and}\quad M_x(p^2) = M(\beta).
	\end{equation*}
	Hence we find immediately that
	\begin{equation*}
		M(\beta) = M_x(p^2) \leq (M(p)^x + M(p)^x)^{1/x} = 2^{1/x}\log p.
	\end{equation*}
	Since $x > 1$, this yields that
	\begin{equation*} \label{BetaUpper}
		M(\beta) < 2\log p.
	\end{equation*}
	On the other hand, we have that $\beta = \zeta^{-1}p^2$ so that, using \eqref{AlphaUpper}, we obtain
	\begin{equation*}
		M(\beta) = M(\zeta^{-1}p^2) \geq \bar M(p^2) = 2\log p
	\end{equation*}
	which is a contradiction.  Thus, at least two points among $(\alpha_1,\alpha_2,\ldots)$ must not be roots of unity.
\end{proof}

\section{Continuity of $x\mapsto M_x(\alpha)$} \label{ContinuitySection}

We have already proved that, for any height function $\phi$, the map $x\mapsto \phi_x(\alpha)$ is left semi-continuous.  In general, we know that 
such functions are not always right semi-continuous.  However, we are able to use Theorem \ref{Achieved} and our observations about the Mahler measure 
to establish right semi-continuity in this case.

\begin{proof}[Proof of Theorem \ref{Continuous}]
	If $\alpha$ is a root of unity, then $M_x(\alpha) = 0$ for all $x$, so we may assume that $\alpha$ is not a root of unity.
	Furthermore, we know by Theorem \ref{LeftSemiContinuous} that this map is left semi-continuous at all points, so it remains only to show that it
	is right semi-continuous.
	
	Now let $\bar x >0$ be a real number, so we must show that
	\begin{equation} \label{RightSemiEnd}
		\lim_{y\to \bar x^+} M_y(\alpha) = M_{\bar x}(\alpha).
	\end{equation}
	Since $x\mapsto M_x(\alpha)$ is decreasing, we know that the left hand side of \eqref{RightSemiEnd} exists.  Moreover, we have that
	\begin{equation} \label{HalfRightSemi}
		\lim_{y\to \bar x^+} M_y(\alpha) \leq M_{\bar x}(\alpha).
	\end{equation}
	
	Now we select a point $y\in (\bar x, \bar x +1]$.  By Theorem \ref{Achieved}, there must exist points 
	\begin{equation*}
		\alpha_1,\ldots,\alpha_N \in \rad(K_\alpha) \setminus \tor(\algt)
	\end{equation*}
	and $\zeta\in \tor(\algt)$ such that
	\begin{equation*}
		\alpha = \zeta\alpha_1\cdots\alpha_N\quad\mathrm{and}\quad M_y(\alpha) = \left( \sum_{n=1}^N M(\alpha_n)^y\right)^{1/y}.
	\end{equation*}
	Since $M_y(\alpha) \leq M(\alpha)$, we may assume without loss of generality that $M(\alpha_n) \leq M(\alpha)$ for all $n$.  Furthermore, since $\alpha$ is not a root
	of unity, we know that $N\geq 1$.  For simplicity, we write now $a_n = M(\alpha_n)$ so that
	\begin{equation*}
		M_y(\alpha) = \left( \sum_{n=1}^N a_n^y\right)^{1/y},
	\end{equation*}
	and note that by Lemma \ref{HeightInK}, we have that
	\begin{equation} \label{aLower}
		a_n \geq C(\alpha)\ \mathrm{for\ all}\ n.
	\end{equation}
	Next, we define the function $f_y$ by
	\begin{equation*}
		f_y(x) = \left( \sum_{n=1}^N a_n^x\right)^{1/x}
	\end{equation*}
	and note that $f_y$ does indeed depend on $y$ because the points $\zeta$ and $\alpha_1,\ldots,\alpha_N$ depend on $y$.  We now have immediately that
	\begin{equation} \label{fAndM}
		f_y(y) = M_y(\alpha).
	\end{equation}
	Since $\alpha = \zeta\alpha_1\cdots\alpha_N$, we know that
	\begin{equation*}
		M_{\bar x}(\alpha) \leq \left(\sum_{n=1}^N M(\alpha_n)^{\bar x}\right)^{1/{\bar x}} = \left(\sum_{n=1}^N a_n^{\bar x}\right)^{1/{\bar x}} = f_y(\bar x),
	\end{equation*}
	and therefore, we obtain that
	\begin{equation} \label{fAndM2}
		 M_{\bar x}(\alpha) \leq f_y(\bar x).
	\end{equation}
	
	We know that $a_n> 0$ for all $n$ implying that $f_y(x) > 0$ for all $x$, so we may define the function $g_y(x) = \log f_y(x)$.
	Since $f_y$ is differentiable on the positive real numbers, we know that $g_y$ is as well.   Therefore, we may apply the Mean Value Theorem to it on $[\bar x, y]$.
	Hence, there exists a point $c\in [\bar x,y]$ such that
	\begin{equation*}
		g_y'(c) = \frac{g_y(y) - g_y(\bar x)}{y-\bar x} = \frac{\log f_y(y) - \log f_y(\bar x)}{y-\bar x}
	\end{equation*}
	and it follows from \eqref{fAndM} and \eqref{fAndM2} that
	\begin{equation} \label{MVTInequality}
		g_y'(c) \leq \frac{\log M_y(\alpha) - \log M_{\bar x}(\alpha)}{y-\bar x}.
	\end{equation}
	We now wish to take limits of both sides of \eqref{MVTInequality} as $y$ tends to $\bar x$ from the right.  However, it is possible that the limit of the left hand
	side either equals $-\infty$ or does not exist as $y\to \bar x^+$.  To solve this problem, we wish to give a lower bound on $g_y'(c)$ that does not depend on $y$.
	
	For any $x >0$, we note that
	\begin{align*}
		g_y'(x) & = \frac{d}{dx}\log f_y(x) \\
			& = \frac{d}{dx} \frac{1}{x}\left( \log \sum_{n=1}^N a_n^x\right) \\
			& = \frac{1}{x^2}\left( x\cdot\frac{ \left(\sum_{n=1}^N a_n^x\log a_n\right)}{\left(\sum_{n=1}^N a_n^x\right)}
					- \log \sum_{n=1}^N a_n^x \right ).
	\end{align*}
	Then using \eqref{aLower}, we have that
	\begin{equation} \label{FirstLower}
		g_y'(x) \geq \frac{1}{x^2}\left( x\cdot \log C(\alpha) - \log \sum_{n=1}^N a_n^x \right ).
	\end{equation}
	Now we need to give an upper bound on $\sum_{n=1}^N a_n^x$.  Recall that we must have $a_n = M(\alpha_n) \leq M(\alpha)$ for all $n$.
	Therefore, we have that
	\begin{equation*}
		\sum_{n=1}^N a_n^x \leq N M(\alpha)^x.
	\end{equation*}
	But using \eqref{aLower} again, we find that
	\begin{equation*}
		M(\alpha) \geq M_y(\alpha) = \left(\sum_{n=1}^N a_n^y\right)^{1/y} \geq (N C(\alpha)^y)^{1/y} =  N^{1/y}C(\alpha).
	\end{equation*}
	We also know $C(\alpha) > 0$  and $y\in (\bar x, \bar x+1]$ so that
	\begin{equation*} \label{NUpper}
		N \leq \left( \frac{M(\alpha)}{C(\alpha)}\right)^y \leq \left( \frac{M(\alpha)}{C(\alpha)}\right)^{\bar x +1},
	\end{equation*}
	and therefore
	\begin{equation*} \label{SumUpper}
		\sum_{n=1}^N a_n^x \leq \frac{M(\alpha)^{x + \bar x +1}}{C(\alpha)^{\bar x + 1}}.
	\end{equation*}
	It now follows that
	\begin{equation*}
		-\log \sum_{n=1}^N a_n^x \geq -\log \left( \frac{M(\alpha)^{x + \bar x +1}}{C(\alpha)^{\bar x + 1}}\right).
	\end{equation*}
	Combining this with \eqref{FirstLower}, we obtain that
	\begin{equation*}
		g_y'(x) \geq\frac{1}{x^2}\left( x\cdot \log C(\alpha) - \log \left( \frac{M(\alpha)^{x + \bar x +1}}{C(\alpha)^{\bar x + 1}}\right) \right),
	\end{equation*}
	so we have shown that
	\begin{equation} \label{SecondLower}
			g_y'(x) \geq \frac{x + \bar x +1}{x^2} \log \left( \frac{C(\alpha)}{M(\alpha)} \right).
	\end{equation}
	
	For simplicity, we now write $D(\alpha,\bar x, x)$ to denote the right hand side of \eqref{SecondLower}.  As a function of $x$, it is obvious that 
	$D(\alpha,\bar x, x)$ is continuous for all $x > 0$.  Hence, we may define
	\begin{equation*}
		\mathcal D(\alpha,\bar x) = \min \{D(\alpha,\bar x, x): x\in [\bar x, \bar x +1]\}.
	\end{equation*}
	Now $\mathcal D(\alpha,\bar x)$ is the desired lower bound on $g_y'(c)$ not depending on $y$.
	
	Since $c\in [\bar x,y] \subset [\bar x,\bar x+1]$, we may apply \eqref{MVTInequality} and \eqref{SecondLower} to see that
	\begin{equation*}
		\mathcal D(\alpha,\bar x) \leq D(\alpha,\bar x, c) \leq g_y'(c) \leq \frac{\log M_y(\alpha) - \log M_{\bar x}(\alpha)}{y-\bar x}.
	\end{equation*}
	By multiplying through by $y - \bar x$, we find that
	\begin{equation} \label{AlmostDone}
		(y-\bar x) \mathcal D(\alpha,\bar x) \leq \log M_y(\alpha) - \log M_{\bar x}(\alpha)
	\end{equation}
	holds for all $y\in (\bar x, \bar x + 1]$.
	
	As we have noted, $\lim_{y\to \bar x^+} M_y(\alpha)$ exists.  Since we have assumed that $\alpha$ is not a root of unity, we conclude from Theorem \ref{Achieved}
	that $M_y(\alpha) > 0$ for all $y$.  It now follows that $\lim_{y\to \bar x^+} \log M_y(\alpha)$ also exists.  Moreover, the term $\mathcal D(\alpha,\bar x)$ is a real
	number not depending on $y$, so the left hand side of \eqref{AlmostDone} tends to zero as $y$ tends to $\bar x$ from the right.  This leaves
	\begin{align*}
		0 & = \lim_{y\to \bar x^+}((y-\bar x) \mathcal D(\alpha,\bar x)) \\
			& \leq \lim_{y\to \bar x^+} (\log M_y(\alpha) - M_{\bar x}(\alpha)) \\
			& =  \lim_{y\to \bar x^+} \log M_y(\alpha) -  \lim_{y\to \bar x^+}\log M_{\bar x}(\alpha) \\
			& =  \lim_{y\to \bar x^+} \log M_y(\alpha) - \log M_{\bar x}(\alpha),
	\end{align*}
	which yeilds
	\begin{equation*}
		\log M_{\bar x}(\alpha) \leq \lim_{y\to \bar x^+} \log M_y(\alpha)
	\end{equation*}
	so that $M_{\bar x}(\alpha) \leq \lim_{y\to \bar x^+} M_y(\alpha)$ and the result follows by combining this with \eqref{HalfRightSemi}.
	
\end{proof}

\section{Weil height}

Before we begin our proof of Theorem \ref{WeilHeightComp}, we recall that if $N$ is any integer, then it is well-known that
\begin{equation} \label{IntPowers}
	h(\alpha^N) = |N|\cdot h(\alpha)
\end{equation}
for all algebraic numbers $\alpha$.  Using this fact, we are able to proceed with our proof.

\begin{proof}[Proof of Theorem \ref{WeilHeightComp}]
	First assume that $x \leq 1$.  By \eqref{MetricHeightConversion} of Theorem \ref{MetricConstruction}, we have that $h_x(\alpha) \leq h(\alpha)$.
	But also, it is well-known that $h$ is already a $1$-metric height.  Therefore, \eqref{NoChangeMetric} of Theorem \ref{MetricConstruction} implies that
	$h_1(\alpha) = h(\alpha)$.  Then by \eqref{Comparisons} of Theorem \ref{MetricConstruction}, we conclude that $h_x(\alpha) \geq h(\alpha)$ verifying the 
	theorem in the case that $x \leq 1$.
	
	Next, we assume that $x > 1$.  Let $N$ be a positive integer and select $\beta\in\algt$ such that $\beta^N = \alpha$.  Therefore, we have that
	\begin{equation*}
		h_x(\alpha) \leq \left(\sum_{n=1}^N h(\beta)^x\right)^{1/x} = (N h(\beta)^x)^{1/x} = N^{1/x}\cdot h(\beta).
	\end{equation*}
	Then using \eqref{IntPowers} we obtain that $h(\alpha) = N\cdot h(\beta)$ which yields
	\begin{equation} \label{TightUpper}
		h_x(\alpha) \leq N^{\frac{1}{x} - 1}\cdot h(\alpha).
	\end{equation}
	Since $x > 1$, the right hand side of \eqref{TightUpper} tends to zero as $N\to\infty$ completing the proof.
\end{proof}

\section{Acknowledgment}

The author wishes to thank the Max-Planck-Institut f\"ur Mathematik where the majority of this research took place.

\end{document}